\theoremstyle{plain}
\newtheorem{theorem}[equation]{Theorem}
\newtheorem{corollary}[equation]{Corollary}
\newtheorem*{unnum_corollary}{Corollary}
\newtheorem{lemma}[equation]{Lemma}
\newtheorem{proposition}[equation]{Proposition}
\theoremstyle{definition}
\newtheorem{definition}[equation]{Definition}
\newtheorem{example}[equation]{Example}
\newcommand{\fS}{\mathfrak{S}}
\newcommand{\sB}{\mathcal{B}}
\newcommand{\ID}{\mathbb{D}}
\newcommand{\IN}{\mathbb{N}}
\newcommand{\IZ}{\mathbb{Z}}
\def\d/{/\mspace{-6.0mu}/}
\tikzset{cross/.style={cross out, draw=black, fill=none, minimum size=2*(#1-\pgflinewidth), inner sep=0pt, outer sep=0pt}, cross/.default={2pt}}
\begin{document}

\title{Characterizing Rothe Diagrams}
\author{Ben Gillen and Jonathan Michala}

\date{March 20, 2023}

\maketitle

\begin{abstract}
    Rothe diagrams are diagrams which track inversions of a permutation.
    We define six main properties that Rothe diagrams fulfill: the southwest, dot, popping, numbering, step-out avoiding, and empty cell gap rules. 
    We prove that -- given an arbitrary bubble diagram -- four different subsets of these properties provide sufficient criteria for the diagram to be a Rothe diagram. 
    We also prove that when a set of ordered, freely floating, non-empty columns satisfy the numbering and step-out avoiding rules, then they can be arranged into a Rothe diagram.
\end{abstract}

\section{Introduction}
Consider a permutation $w$ in $\fS_\infty$, the group of permutations on $\IZ^+$ with finitely many unfixed points.
For non-identity permutations, write a truncated $w = w_1w_2...w_n$, where $w_n$ is the last unfixed point.
The permutation correspondingly sends $i$ to $w_i$.
An \textbf{\textit{inversion}} of $w$ is a pair of indices $i < j$ such that $w_i > w_j$.
For example, if $w = 152869347$, then $w_4 = 8$ and $w_7 = 3$ is an inversion.
One way to visualize inversions is with the \textbf{\textit{Rothe diagram}} of $w$, defined to be the following subset of cells in the first quadrant:
\begin{equation}
\ID(w) = \{(i,w_j) \colon i < j, w_i > w_j\} \subset \IZ^+\! \times \IZ^+.
\end{equation}

Graphically, we represent cells in $\ID(w)$ by bubbles and use the French convention that the $i$ coordinates are written on the vertical axis and the $w_j$ coordinates are written on the horizontal axis.
Figure \ref{fig:rothe} shows the Rothe diagram $\ID(152869347)$.
There is a bubble at $(4,3)$ for example, because $i = 4 < 7 = j$ and $w_i = 8 > 3 = w_j$.

\begin{figure}
\begin{center}
\begin{tikzpicture}[scale=0.3]
\def\rows{8} 
\def\cols{7} 
  \draw[-](0,0) -- (\rows,0); 
  \draw[-](0,0) -- (0,\cols); 
  \foreach \y in {1,2,...,\cols} 
    \draw (-0.5,\y-0.5) node{\y}; 
  \draw (-2,\cols / 2) node{$i$};
  \foreach \x in {1,2,...,\rows}
    \draw (\x-0.5,- 0.5) node{\x};
  \draw (\rows / 2,-2) node{$w_j$}; 
  \foreach \x/\y in {2/2,3/2,4/2,3/4,4/4,6/4,7/4,3/5,4/5,3/6,4/6,7/6} 
    \draw (\x-0.5,\y-0.5) circle(0.5);
\end{tikzpicture}
\hspace{2cm}
\begin{tikzpicture}[scale=0.3]
\def\rows{8} 
\def\cols{7} 
  \draw[-](0,0) -- (\rows,0); 
  \draw[-](0,0) -- (0,\cols); 
  \foreach \y in {1,2,...,\cols} 
    \draw (-0.5,\y-0.5) node{\y}; 
  \draw (-2,\cols / 2) node{$i$};
  \foreach \x in {1,2,...,\rows}
    \draw (\x-0.5,- 0.5) node{\x};
  \draw (\rows / 2,-2) node{$w_j$}; 
  \foreach \x/\y in {1/1,2/3,3/7,5/2,6/5,8/4} { 
    \draw[fill=black] (\x-0.5,\y-0.5) circle(0.2);
    \draw[-latex](\x-0.5,\y-0.5) -- (\x-0.5,7.7);
    \draw[-latex](\x-0.5,\y-0.5) -- (8.7,\y-0.5); }
  \foreach \x/\y in {2/2,3/2,4/2,3/4,4/4,6/4,7/4,3/5,4/5,3/6,4/6,7/6} 
    \draw (\x-0.5,\y-0.5) circle(0.5);
\end{tikzpicture}
\caption{\label{fig:rothe} The Rothe diagram $\ID(152869347)$ and the placement of its death rays}
\end{center}
\end{figure}
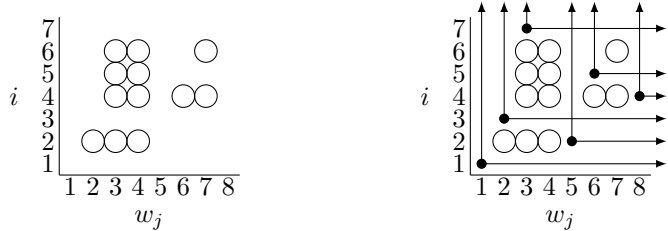

\begin{sloppypar}
There is much research in combinatorics which utilize Rothe diagrams, including \cite{anderson2018diagrams}\cite{bergeron92combinatorial}\cite{fan2019standard}\cite{fan2021set}\cite{fan2022upper}\cite{lambert2018theta}\cite{liu2022schubert}\cite{pawlowski2018cohomology}\cite{pechenik2022regularity}\cite{reiner1998percentage}.
Here we survey some particular uses of Rothe diagrams as well.
Edelman and Greene extend Stanley's results in \cite{stanley1984number} by elucidating the structure of a sum (which depends on Rothe diagrams) that provides the number of reduced decompositions of the most transposed word in the symmetric group.
They give a combinatorial interpretation of the sum's coefficient when the Rothe diagram's column sums are in nondecreasing order or the conjugate of the Rothe diagram's row sums are in nondecreasing order.
They also prove a conjecture from the same paper, narrowing the coefficients of the sum in general from integers to nonnegative integers \cite{edelman1987balanced}.
Linusson and Potka reinterpret the tableaux generated by the Edelman-Greene correspondence in \cite{edelman1987balanced} by using the shape of the bottom-left corner of the Rothe diagram \cite{linusson2018new}.
\end{sloppypar}

Hamaker, Marberg, and Pawlowski define an involution analogue of Rothe diagrams which is a key tool in understanding their corresponding analogue of Stanley symmetric functions.
They show that the involution Stanley symmetric function for the longest element of a finite symmetric group is a product of staircase-shaped Schur functions.
This implies that the number of involution words for the longest element of a finite symmetric group is equal to the dimension of a certain irreducible representation of a Weyl group of type B \cite{hamaker2018involution}.

Here are various other results concerning Rothe diagrams.
In \cite{assaf2020bijective}, Assaf gives a bijective proof that Schubert Polynomials are equivalent to Kohnert Polynomials, which are obtained from Rothe diagrams via Kohnert moves \cite{kohnert1991weintrauben}.
Reiner and Shimozono use Rothe diagrams to characterize a map (called plactification) from reduced words to words;
plactification has applications to the enumeration of reduced words, Schubert Polynomials, and Specht Modules \cite{reiner1995plactification}.
If the Rothe diagram of a permutation is a skew shape, then the corresponding skew Schur function is equal to the Stanley symmetric function of the permutation \cite{billey1993some}.
A permutation is vexillary, or 2143-avoiding, if and only if its Rothe diagram is equivalent to the diagram of a partition \cite{manivel2001symmetric}, if and only if the Stanley symmetric function of the permutation is a Schur function \cite{macdonald1991Schubert}\cite{stanley1984number}.

In this paper, we fully characterize Rothe diagrams with insightful properties in the space of arbitrary diagrams.
In the future, we wish to explore potential connections this characterization may have with the various research uses above.

A well-known initial characterization is the \textbf{Lehmer code} of a permutation $L(w)$, which is a sequence that counts the number of inversions at every index \cite{lehmer1960teaching}.
That is
$$L(w) = (a_1,a_2,...) \in \IN^\infty \text{ such that } a_i = \#\{j > i \colon w_i > w_j\}.$$
The Lehmer code counts the number of bubbles in each row of a Rothe diagram, and it defines a bijection between permutations in $\fS_\infty$ and sequences in $\IN^\infty$ with finitely many nonzero values.
Thus, for such a sequence $a_1,a_2,...$, there is only one Rothe diagram that has $a_i$ bubbles in the $i$th row for all $i$.
For example, the first diagram $\ID(231)$ in Figure \ref{fig:non_rothe_examples} is the only Rothe diagram with one bubble in the first row and one in the second row. 
The second and third diagrams, therefore, cannot be Rothe diagrams, nor can any other diagram with one bubble in the first row and one in the second.
We will use the Rothe diagram in Figure \ref{fig:rothe} and these two non-Rothe diagrams, $D_1$ and $D_2$, as our toy examples throughout the paper.

\begin{figure}
\begin{center}
\begin{tikzpicture}[scale=0.3]
\def\rows{3} 
\def\cols{3} 
  \draw[-](0,0) -- (\rows,0); 
  \draw[-](0,0) -- (0,\cols); 
  \foreach \x/\y in {1/1,1/2} 
    \draw (\x-0.5,\y-0.5) circle(0.5);
\end{tikzpicture}
\hspace{1.4cm}
\begin{tikzpicture}[scale=0.3]
\def\rows{3} 
\def\cols{3} 
  \draw[-](0,0) -- (\rows,0); 
  \draw[-](0,0) -- (0,\cols); 
  \foreach \x/\y in {1/1,2/2} 
    \draw (\x-0.5,\y-0.5) circle(0.5);
\end{tikzpicture}
\hspace{1.4cm}
\begin{tikzpicture}[scale=0.3]
\def\rows{3} 
\def\cols{3} 
  \draw[-](0,0) -- (\rows,0); 
  \draw[-](0,0) -- (0,\cols); 
  \foreach \x/\y in {2/1,1/2} 
    \draw (\x-0.5,\y-0.5) circle(0.5);
\end{tikzpicture}
    \caption{$\ID(231)$ and two non-Rothe diagrams, $D_1$ and $D_2$ (respectively)}
    \label{fig:non_rothe_examples}
\end{center}
\end{figure}
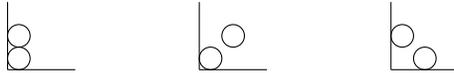

Hence, the Lehmer code can tell us which diagrams are Rothe diagrams.
Given any finite bubble diagram $D \subset \IZ^+ \!\times \IZ^+$, we count the number of bubbles in each row.
We then find the Rothe diagram corresponding to this sequence using Lehmer's bijection.
If the two diagrams match, then $D$ is a Rothe diagram.
If not, then $D$ is not a Rothe diagram.
However, this characterization doesn't provide much intuition about the general properties of a diagram that allow it to be a Rothe diagram.
In this paper, we find such properties, giving the following full characterizations of Rothe diagrams.

\begin{theorem}
\label{thm:main}
Given a diagram D, the following are equivalent:
\begin{enumerate}[label=(\roman*)]
    \item $D$ is a Rothe diagram.
    \item $D$ satisfies the vertical popping and emtpy cell gap rules.
    \item $D$ satisfies the numbering and dot rules.
    \item $D$ satisfies the dot and southwest rules.
    \item $D$ satisfies the numbering rule and is step-out avoiding.
\end{enumerate}
\end{theorem}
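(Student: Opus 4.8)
The plan is to organize the five-way equivalence around (i) as a hub: I prove that a genuine Rothe diagram satisfies all six rules (so (i) implies each of (ii)--(v)), and separately that each of the four property sets forces a diagram to be Rothe (so each of (ii)--(v) implies (i)). Because the dot rule is shared by (iii) and (iv) and the numbering rule by (iii) and (v), the necessity half can be dispatched by a single lemma --- ``$\ID(w)$ satisfies every rule'' --- and the sufficiency half then splits into four arguments that reuse one another's work. I expect necessity to be routine and sufficiency to carry all the difficulty.

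For necessity I would translate each rule into the language of inversions, using the death-ray reformulation suggested by Figure \ref{fig:rothe}: place a dot at $(i,w_i)$ for every $i$, fire rays north and east, and recall that $\ID(w)$ is exactly the set of cells struck by no ray. Each rule then becomes a local statement about the uncovered region that can be checked directly from the defining inequalities, namely that $(i,j)\in\ID(w)$ iff $j<w_i$ and $w^{-1}(j)>i$. For example, the empty cell gap and vertical popping rules should fall out of comparing these two inequalities as $i$ or $j$ varies, while the dot and numbering rules follow from the observation that the columns $1,\dots,w_i-1$ in row $i$ are partitioned into those whose dot lies above row $i$ (a cell) and those whose dot lies below (a gap). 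I would simply run through the six definitions in turn.

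For sufficiency I would lean on the Lehmer bijection. Given a diagram $D$ satisfying one of the property sets, let $a=(a_1,a_2,\dots)$ be its vector of row sums and let $R$ be the unique Rothe diagram with Lehmer code $a$, which exists and is unique by the bijection between $\fS_\infty$ and finitely supported sequences in $\IN^\infty$. Since $D$ and $R$ have equal row sums by construction, it suffices to prove that in every row the cells of $D$ occupy the same columns as those of $R$. Equivalently, I would try to reconstruct a permutation $w$ directly from $D$ --- processing the rows and placing the dot of each row in the leftmost admissible column --- and then verify $D=\ID(w)$. The content of each sufficient property set is precisely what is needed to make this reconstruction well defined (one dot per row and per column) and to certify that the cells of $D$ are exactly the cells left uncovered by the resulting death rays.

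The hard part will be these sufficiency arguments: showing that purely local rules force the correct global column positions. The cleanest route looks like induction on rows, say from the topmost nonempty row downward using finiteness, where at each step the rules must pin down the columns of the cells in the current row given the rows already matched to $R$; the numbering and step-out avoiding rules appear tailored to control horizontal placement, while the popping and gap rules control the interaction between successive rows. I anticipate the weakest-looking set, (ii), combining vertical popping with the empty cell gap rule, to demand the most work, since it offers the least direct handle on column positions and most nearly amounts to re-deriving the Rothe structure from scratch.
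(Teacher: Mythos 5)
Your architecture is the same as the paper's: (i) serves as the hub; necessity is dispatched wholesale by the death-ray description of $\ID(w)$ (the paper's Lemma \ref{lem:sw_dot}, which drives all of its Section 2 propositions); and sufficiency splits into exactly the two strategies you describe --- Lehmer-code uniqueness (the paper's route for (ii) and (v): show that the rules plus the row sums pin down a diagram uniquely, then invoke that the Rothe diagram with the same Lehmer code satisfies the rules, so the two must coincide) and reconstruction of $w$ from the dots followed by verifying that the bubbles are precisely the cells to the left of and below dots (the paper's route for (iii) and (iv)). Your reading of which rules control horizontal placement versus row-to-row interaction, and your prediction that (ii) is the costliest implication, also agree with the paper.

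The one concrete step in your plan that would fail is the direction of the induction. You propose working ``from the topmost nonempty row downward using finiteness,'' but every rule in play is downward-referencing: the row dot in row $i$ is defined relative to dots in rows $i' < i$, vertical popping forbids bubbles above dots placed in lower rows, the empty cell gap rule counts final bubbles in a box lying strictly \emph{below} the row containing the gap, and vertical numbering in a column is read from the bottom up. Consequently the topmost nonempty row is exactly the part of the diagram that is not locally determined: $\ID(231)$ has its row-$2$ bubble in column $1$, while $\ID(132)$ has its row-$2$ bubble in column $2$, and which placement is correct is dictated by row $1$, not by anything visible from above. This is why all of the paper's sufficiency arguments induct from row $1$ upward (row $1$, then row $k+1$, in (ii)$\Rightarrow$(i); base case $i=1$ in (iii)$\Rightarrow$(i); bottom-up row-by-row construction in (v)$\Rightarrow$(i)), so that the inductive hypothesis covers precisely the rows that the rules refer to. Flip your induction to run bottom-up and your outline coincides with the paper's proof; as written, the downward induction has no base case that determines anything.
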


In a more general setting, instead of being given a diagram, we are only given a collection of columns ordered from left to right.
We ask if we can construct a Rothe diagram with these columns in this order.
Based on our characterization of Rothe diagrams, we get the following answer with appropriate reformulations of the numbering and step-out avoiding rules.

\begin{corollary}
\label{cor:free_columns}
An ordered collection of free columns may be placed uniquely into a Rothe diagram if and only if the collection satisfies the numbering rule and is step-out avoiding.
\end{corollary}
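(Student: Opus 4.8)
The plan is to deduce the corollary from the equivalence (i) $\iff$ (v) in Theorem \ref{thm:main}, which states that a placed diagram is a Rothe diagram exactly when it satisfies the numbering rule and is step-out avoiding. The first task is to pin down the reformulations of these two rules for free columns, and I would design them to be the \emph{position-independent} shadow of the placed-diagram rules: the numbering rule, read on an ordered collection of free columns, assigns to each column a target horizontal coordinate determined by that column's set of filled rows together with the columns preceding it, and step-out avoidance becomes a condition comparing consecutive columns in the given left-to-right order. The point of these reformulations is that neither the numbering count nor the step-out configuration depends on the absolute horizontal coordinates a column occupies, only on the columns' contents and their relative order.

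For the forward direction, suppose the ordered collection of free columns can be placed into a Rothe diagram $D$. By Theorem \ref{thm:main}, $D$ satisfies the numbering rule and is step-out avoiding as a placed diagram. I would then observe that these placed-diagram conditions restrict to precisely the reformulated free-column conditions on the underlying columns, so the free columns satisfy the numbering rule and are step-out avoiding.

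For the converse, the key step is to use the numbering rule to manufacture the placement. Reading the columns left to right, the numbering rule hands out a target horizontal coordinate to each column; I would first verify that this assignment is well defined and strictly increasing, so that the columns retain their prescribed order and no two collide, and then fill the remaining horizontal coordinates with empty columns to obtain a placed diagram $D$. The heart of the argument is to check that $D$ satisfies the placed numbering rule and is step-out avoiding: the numbering rule holds by construction, since each column sits at the coordinate it was assigned, while step-out avoidance of $D$ must be derived from step-out avoidance of the free collection, the delicate point being that inserting empty columns between consecutive non-empty columns cannot create a new step-out violation. Once this is in place, Theorem \ref{thm:main} immediately yields that $D$ is a Rothe diagram.

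Uniqueness is the cleanest part: the numbering rule forces each column's horizontal coordinate, so any two valid placements assign identical coordinates and hence coincide; alternatively, uniqueness follows from Lehmer's bijection, since the row-count sequence of $D$ determines a single Rothe diagram. I expect the main obstacle to be the converse construction — specifically, proving that the coordinates produced by the numbering rule are strictly increasing and that the inserted empty columns preserve step-out avoidance — since this is exactly where the two reformulated rules must interact to reproduce the full placed-diagram hypotheses of Theorem \ref{thm:main}(v).
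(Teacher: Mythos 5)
Your proposal matches the paper's proof essentially step for step: the forward direction restricts the placed-diagram conditions (Propositions \ref{number_prop} and \ref{prop:stepout}) to the columns, the converse places each free column at the horizontal coordinate given by the label of its lowest bubble (well defined and strictly increasing by the collection's numbering condition) and invokes the equivalence (i)$\iff$(v) of Theorem \ref{thm:main}, and uniqueness follows because the numbering condition pins down every column's coordinate. The only cosmetic difference is that the paper's definition of a step-out for free columns is placement-independent by fiat, so the ``delicate point'' you flag about inserted empty columns is immediate rather than something requiring a separate argument.
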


In the next section we define the properties for bubble diagrams mentioned above and prove that they are necessary in Rothe diagrams.
Then, in the following section, we show how these properties combine to sufficiently characterize Rothe diagrams.
Finally, in the last section, we consider ordered sets of nonempty columns.
We use our established properties to determine when a Rothe diagram can be built out of a given set of these columns.

\section{Properties of Rothe diagrams}

In this section we define the properties mentioned in Theorem \ref{thm:main} and prove that Rothe diagrams satisfy them.
As we talk about these properties, it will be useful to have the following equivalent construction of the Rothe diagram $\ID(w)$.
Add dots to $\IZ^+\!\times \IZ^+$ in the cells $(i,w_i)$ for all $i$.
At each dot, draw two infinite rays coming out of it, one pointing up and one pointing right.
We call these rays \textbf{\textit{death rays}} and the dot at $(i,w_i)$ a \textbf{\textit{death ray origin}}.
Then, the bubbles in $\ID(w)$ should be placed in the cells not touched by a death ray.
Indeed, a bubble is placed in $(i,w_j)$ if and only if $i < j$ and $w_i > w_j$, which is to say $(i,w_j)$ is below $(j,w_j)$ and to the left of $(i,w_i)$.
These are the locations of the death ray origins in the $w_j$th column and $i$th row respectively, and being below and to the left of death ray origins is equivalent to not being touched by a death ray.
See the second diagram in Figure \ref{fig:rothe} for this alternate visualisation.
This analysis can be written in a very useful statement:
\begin{lemma}
\label{lem:sw_dot}
In a Rothe diagram, a cell is filled with a bubble if and only if it is to the left of a death ray origin and below a death ray origin.
\end{lemma}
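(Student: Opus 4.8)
The plan is to prove the biconditional by unpacking the definitions of $\ID(w)$ and of the death ray origins, with the bijectivity of $w$ as the one structural ingredient. First I would record that, because $w$ permutes $\IZ^+$, each row and each column of the grid contains exactly one death ray origin: the unique origin in row $a$ sits at $(a, w_a)$, and the unique origin in column $b$ sits at $(w^{-1}(b), b)$, where $w^{-1}(b)$ denotes the index $j$ with $w_j = b$. This is what makes the phrases ``to the left of a death ray origin'' and ``below a death ray origin'' unambiguous --- they refer to the single origin in the relevant row or column --- and it lets me translate them into arithmetic: a cell $(a,b)$ lies to the left of a death ray origin exactly when $w_a > b$, and it lies below a death ray origin exactly when $w^{-1}(b) > a$.

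With this dictionary in hand, both directions are short. For the forward direction I would take a bubble, which by definition of $\ID(w)$ has the form $(i, w_j)$ with $i < j$ and $w_i > w_j$. Writing $a = i$ and $b = w_j$, the inequality $w_i > w_j$ reads $w_a > b$, so the cell lies to the left of the row-$a$ origin, while $i < j = w^{-1}(b)$ gives $w^{-1}(b) > a$, so the cell lies below the column-$b$ origin. For the reverse direction I would begin with a cell $(a,b)$ satisfying $w_a > b$ and $w^{-1}(b) > a$, set $i = a$ and $j = w^{-1}(b)$ so that $b = w_j$, and observe that $i < j$ while $w_i = w_a > b = w_j$; hence $(a,b) = (i, w_j)$ satisfies the defining condition for membership in $\ID(w)$ and is a bubble.

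The only point needing care --- and the nearest thing to an obstacle --- is the bookkeeping forced by the French coordinate convention and the directions of the rays: ``up'' increases the vertical coordinate and ``right'' increases the horizontal coordinate, so a cell is spared by its column's origin precisely when it lies strictly below that origin, and spared by its row's origin precisely when it lies strictly to its left. Once the ray directions are pinned down, the equivalence between ``not touched by any death ray'' and the conjunction of the two strict inequalities is immediate, and the lemma follows.
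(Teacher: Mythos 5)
Your proposal is correct and follows essentially the same route as the paper: the paper's argument is precisely the observation that a bubble sits at $(i,w_j)$ iff $i<j$ and $w_i>w_j$, which is read off as being below the origin $(j,w_j)$ in its column and to the left of the origin $(i,w_i)$ in its row. Your write-up merely makes explicit what the paper leaves implicit --- that bijectivity of $w$ gives exactly one origin per row and per column, so the two strict inequalities are equivalent to being untouched by any ray --- which is a sound (and slightly more careful) rendering of the same argument.
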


\subsection{Southwest Rule}

\begin{definition}
As defined in \cite{reiner1995plactification}, a diagram $D \subset \IZ^+ \!\times \IZ^+$ is \textbf{southwest} if $(i,j) \in D$ and $(i',j') \in D$ imply $(\min(i,i'),\min(j,j')) \in D$. 
\end{definition}

\begin{proposition}
Rothe diagrams are southwest.
\end{proposition}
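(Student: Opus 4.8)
The plan is to reduce everything to Lemma \ref{lem:sw_dot}, which characterizes bubbles by two \emph{independent} geometric conditions: a cell of a Rothe diagram $\ID(w)$ is a bubble precisely when it lies to the left of the death ray origin in its row and below the death ray origin in its column. Because the southwest property concerns the coordinatewise minimum $(\min(i,i'),\min(j,j'))$, whose two entries need not be contributed by the same input cell, I would verify the two conditions of Lemma \ref{lem:sw_dot} separately, using whichever of the two given bubbles is convenient for each coordinate.

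Concretely, suppose $(i,j),(i',j')\in\ID(w)$, and write $m=\min(i,i')$ and $n=\min(j,j')$. For the row condition, choose whichever cell realizes $m$; by symmetry say $i\le i'$, so $m=i$. By Lemma \ref{lem:sw_dot} this cell lies to the left of the death ray origin in row $i$, which sits in some column $c$ with $c>j\ge n$. Since $(m,n)=(i,n)$ occupies that same row with column $n<c$, it too lies to the left of that origin. For the column condition, choose whichever cell realizes $n$; say $j\le j'$, so $n=j$. That cell lies below the death ray origin in column $j$, which sits in some row $r$ with $r>i\ge m$. Since $(m,n)=(m,j)$ occupies that same column with row $m<r$, it too lies below that origin. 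Lemma \ref{lem:sw_dot} then yields $(m,n)\in\ID(w)$, which is exactly the southwest condition.

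The only point needing care — rather than any real difficulty — is that the row minimum and the column minimum may be achieved by different cells among $(i,j)$ and $(i',j')$, so one cannot reduce to a single bubble dominating both coordinates at once; this is why the two WLOG orderings above are chosen independently. The two-condition form of Lemma \ref{lem:sw_dot} is precisely what removes this snag: the ``left of origin'' test depends only on the row and the ``below origin'' test only on the column, and each test is preserved when the relevant free coordinate is decreased. I expect no obstacle beyond this bookkeeping, since the argument amounts to monotonicity of the two death ray conditions under lowering a single coordinate.
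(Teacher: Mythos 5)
Your proof is correct and is essentially the paper's own argument: both reduce to Lemma \ref{lem:sw_dot} and verify the two death-ray conditions for the cell $(\min(i,i'),\min(j,j'))$ separately, using for each coordinate whichever of the two given bubbles realizes that minimum (so the cell shares a row with one bubble and a column with the other). The only difference is that you spell out the bookkeeping about the two minima possibly coming from different cells, which the paper's shorter proof handles implicitly.
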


\begin{proof}
Consider two bubbles $(i,w_j)$ and $(i',w_{j'})$ in a Rothe diagram $\ID(w)$.
Using death rays and Lemma \ref{lem:sw_dot}, the cell $A =(\min(i,i'),\min(w_j,w_{j'}))$ is weakly to the left of $(i',w_{j'})$, which is to the left of a death ray origin.
And $A$ is weakly below $(i,w_j)$ which is below a death ray origin.
Therefore, there is a bubble in cell $A$.
\end{proof}

\subsection{Dot and Popping Rules}

Now, we define two notions of placing dots on any diagram and see that on Rothe diagrams, these agree with each other and coincide with the placement of death ray origins.

\begin{definition}
Consider a diagram $D \subset \IZ^+ \!\times \IZ^+$.
Inductively define $c_i = \min\{j \in \IZ^+ \colon j > j' \;\forall\; (i,j') \in D, j \neq c_{i'} \;\forall\; i' < i\}$.
Then define $R(D) = \{(i,c_i)\}_{i=1}^\infty$ to be the set of cells in row $i$ and column $c_i$, which we will call the \textbf{\textit{row dots}} of $D$.
Similarly, inductively define $r_j = \min\{i \in \IZ^+ \colon i > i' \;\forall\; (i',j) \in D, i \neq r_{j'} \;\forall\; j' < j\}$.
Then define $C(D) = \{(r_j,j)\}_{j=1}^\infty$ to be the set of cells in row $r_j$ and column $j$, which we will call the \textbf{\textit{column dots}} of $D$.
We say that a diagram satisfies the \textbf{\textit{dot rule}} and is a \textbf{\textit{dotted diagram}} if $R(D) = C(D)$.
\end{definition}

To place the row dots on a diagram, starting in row 1 and going up, we place a dot in the first cell to the right of all the bubbles in the row, such that we don't place it in the same column as an existing dot.
To place the column dots, moving left to right, we place a dot in the first cell above all of the bubbles in a column, such that we don't place it in the same row as an existing dot.

\begin{example}
In figure \ref{fig:main_dot}, we place row and column dots on $\ID(152869347)$, $D_1$, and $D_2$.
The row and column dots agree in $\ID(152869347)$ and $D_2$, so they satisfy the dot rule.
However $D_1$ does not satisfy the dot rule, because $R(D_1) \neq C(D_1)$.
\end{example}

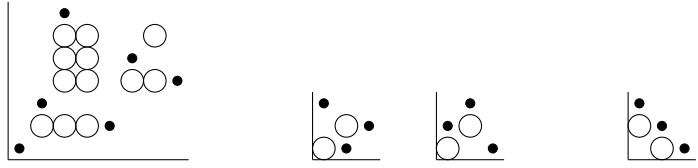
\begin{figure}
\begin{center}
\begin{tikzpicture}[scale=0.3]
\def\rows{8} 
\def\cols{7} 
  \draw[-](0,0) -- (\rows,0); 
  \draw[-](0,0) -- (0,\cols); 
  \foreach \x/\y in {1/1,2/3,3/7,5/2,6/5,8/4} {
    \draw[fill=black] (\x-0.5,\y-0.5) circle(0.2); }
  \foreach \x/\y in {2/2,3/2,4/2,3/4,4/4,6/4,7/4,3/5,4/5,3/6,4/6,7/6} 
    \draw (\x-0.5,\y-0.5) circle(0.5);
\end{tikzpicture}
\hspace{1.4cm}
\begin{tikzpicture}[scale=0.3]
\def\rows{3}
\def\cols{3}
  \draw[-](0,0) -- (\rows,0);
  \draw[-](0,0) -- (0,\cols);
  \foreach \x/\y in {1/1,2/2} 
    \draw (\x-0.5,\y-0.5) circle(0.5);
  \foreach \x/\y in {2/1,3/2,1/3}{ 
    \draw[fill=black] (\x-0.5,\y-0.5) circle(0.2); }
\end{tikzpicture}
\hspace{0.5cm}
\begin{tikzpicture}[scale=0.3]
\def\rows{3}
\def\cols{3}
  \draw[-](0,0) -- (\rows,0);
  \draw[-](0,0) -- (0,\cols);
  \foreach \x/\y in {1/1,2/2} 
    \draw (\x-0.5,\y-0.5) circle(0.5);
  \foreach \x/\y in {1/2,2/3,3/1}{ 
    \draw[fill=black] (\x-0.5,\y-0.5) circle(0.2); }
\end{tikzpicture}
\hspace{1.4cm}
\begin{tikzpicture}[scale=0.3]
\def\rows{3} 
\def\cols{3} 
  \draw[-](0,0) -- (\rows,0); 
  \draw[-](0,0) -- (0,\cols); 
  \foreach \x/\y in {1/3,2/2,3/1} {
    \draw[fill=black] (\x-0.5,\y-0.5) circle(0.2); }
  \foreach \x/\y in {2/1,1/2} 
    \draw (\x-0.5,\y-0.5) circle(0.5);
\end{tikzpicture}
  \caption{\label{fig:main_dot}Row and column dot placement in diagrams $\ID(152869347)$, $D_1$, and $D_2$.}
\end{center}
\end{figure}

We now show that on Rothe diagrams, the row and column dots agree and are the death ray origins.

\begin{proposition}\label{pd=>dot}
For a Rothe diagram $\ID(w)$, we have $R(\ID(w)) = C(\ID(w)) = \{(i,w_i) \in \IZ^+\!\times \IZ^+\}$.
\end{proposition}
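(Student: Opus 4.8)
The plan is to prove the two equalities $R(\ID(w)) = \{(i,w_i)\}$ and $C(\ID(w)) = \{(i,w_i)\}$ separately, each by induction — on the row index $i$ for the row dots and on the column index $j$ for the column dots. I would carry out the row-dot case in full; the column-dot case is symmetric under interchanging the roles of rows and columns (with $w^{-1}(j)$ playing the role of $w_i$). So the core goal is to show $c_i = w_i$ for every $i$, i.e. that the $i$th row dot lands exactly on the death ray origin of row $i$.

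First I would record the precise location of the bubbles near row $i$ using Lemma \ref{lem:sw_dot}. The death ray origin of row $i$ sits at $(i,w_i)$ and the death ray origin of column $j$ sits at $(w^{-1}(j),j)$, so for a column $j$ the cell $(i,j)$ carries a bubble if and only if it is left of $(i,w_i)$ and below $(w^{-1}(j),j)$, that is if and only if $j < w_i$ and $i < w^{-1}(j)$. In particular every bubble in row $i$ lies strictly left of column $w_i$, so $w_i$ exceeds every bubble column $j'$ in row $i$; and since $w$ is injective, the inductive hypothesis $c_{i'} = w_{i'}$ for $i' < i$ gives $w_i \neq c_{i'}$. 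Thus $w_i$ satisfies both requirements in the definition of $c_i$, whence $c_i \le w_i$.

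The one substantive verification is minimality: no $j < w_i$ may satisfy both defining conditions. I would split on whether $j$ is one of the earlier values $w_1,\dots,w_{i-1}$. If $j = w_{i'}$ with $i' < i$, then by the inductive hypothesis $j = c_{i'}$, so $j$ is excluded as the column of a previous dot. Otherwise $w^{-1}(j) \ge i$, and since $j < w_i$ forces $w^{-1}(j) \neq i$, we get $w^{-1}(j) > i$; combined with $j < w_i$ this is exactly the bubble criterion above, so $(i,j)$ is itself a bubble and $j$ cannot strictly exceed every bubble column in its own row. Either way $j$ is disqualified, forcing $c_i = w_i$ and completing the induction.

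I do not expect a serious obstacle: the content lies entirely in translating the two minimality conditions into the death ray picture, and the two-case split handles precisely the columns left of $w_i$. The column-dot argument then runs identically with $i$ and $j$ interchanged, yielding $r_j = w^{-1}(j)$ and hence $C(\ID(w)) = \{(w^{-1}(j),j)\} = \{(i,w_i)\}$, which matches $R(\ID(w))$ and proves the proposition. The only point demanding care is keeping the strict inequalities straight — why $w^{-1}(j) \neq i$ and why a bubble column cannot serve as its own row dot — but both follow immediately from injectivity of $w$ and from the strict $>$ appearing in the definitions of $c_i$ and $r_j$.
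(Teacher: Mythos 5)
Your proof is correct and follows essentially the same route as the paper's: induction on the row index, using Lemma \ref{lem:sw_dot} to show that $w_i$ is an eligible dot position while every column $j < w_i$ is disqualified either because it equals some $c_{i'} = w_{i'}$ with $i' < i$ or because $(i,j)$ is itself a bubble, with the column-dot case handled by symmetry. Your version merely organizes the case split in the opposite order (on whether $j \in \{w_1,\dots,w_{i-1}\}$ rather than on whether $(i,j)$ is empty) and is somewhat more explicit about minimality and injectivity, which the paper leaves implicit.
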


\begin{proof}
First, we find $R(\ID(w))$.
In the first row, we have a death ray origin at $(1,w_1)$ meaning every cell to the left of $(1,w_1)$ is a bubble, and every cell to the right is empty.
Therefore, $(1,w_1)$ is the first cell to the right of all the bubbles in the first row, implying $(1,w_1) \in R(\ID(w))$.

Now, we induct and assume that the death ray origin $(i',w_{i'})$ is in $R(\ID(w))$ for all $i' < i$.
There is a death ray origin at $(i,w_i)$, meaning there are no bubbles to the right of $(i,w_i)$.
If all of the cells to the left of $(i,w_i)$ are bubbles we are done, so assume $(i,w_{i'})$ is an empty cell to the left of $(i,w_i)$.
Since the cell is empty, and since row $i$'s death ray origin is to the right, there must be a death ray origin below the empty cell.
This is $(i',w_{i'})$, so $i' < i$ and a row dot was placed there by induction.
Hence, we cannot place a row dot in $(i,w_{i'})$, nor in any empty cell to the left of $(i,w_i)$.
Then we must place the row dot in cell $(i,w_i)$, and we conclude that $R(\ID(w)) = \{(i,w_i) \in \IZ^+\!\times \IZ^+\}$.
Symmetric reasoning proves that $C(\ID(w)) = \{(i,w_i) \in \IZ^+\!\times \IZ^+\}$.
\end{proof}

We now define vertical and horizontal popping rules for any diagram, which correspond to death rays in the Rothe diagram case.

\begin{definition}
The \textbf{\textit{vertical popping}} rule states that bubbles are not allowed to be placed above row dots, and the \textbf{\textit{horizontal popping}} rule states that bubbles are not allowed to be placed to the right of column dots.
\end{definition}

\begin{example}
In Figure \ref{fig:main_va}, we see that $\ID(152869347)$ and $D_2$ satisfy the vertical popping rule, but $D_1$ does not.
Similarly, the horizontal popping rule is satisfied by the same diagrams.
In particular, $D_2$ is an example of a diagram that satisfies both the dot rule and the popping rules, but is not a Rothe diagram, implying we will need more conditions to characterize Rothe diagrams.
\end{example}

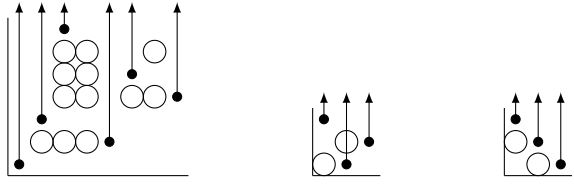
\begin{figure}
\begin{center}
\begin{tikzpicture}[scale=0.3]
\def\rows{8} 
\def\cols{7} 
  \draw[-](0,0) -- (\rows,0); 
  \draw[-](0,0) -- (0,\cols); 
  \foreach \x/\y in {1/1,2/3,3/7,5/2,6/5,8/4} {
    \draw[fill=black] (\x-0.5,\y-0.5) circle(0.2); 
    \draw[-latex](\x-0.5,\y-0.5) -- (\x-0.5,7.7); }
  \foreach \x/\y in {2/2,3/2,4/2,3/4,4/4,6/4,7/4,3/5,4/5,3/6,4/6,7/6} 
    \draw (\x-0.5,\y-0.5) circle(0.5);
\end{tikzpicture}
\hspace{1.4cm}
\begin{tikzpicture}[scale=0.3]
\def\rows{3} 
\def\cols{3} 
  \draw[-](0,0) -- (\rows,0); 
  \draw[-](0,0) -- (0,\cols); 
  \foreach \x/\y in {1/3,3/2,2/1} {
    \draw[fill=black] (\x-0.5,\y-0.5) circle(0.2); 
    \draw[-latex](\x-0.5,\y-0.5) -- (\x-0.5,3.7); }
  \foreach \x/\y in {1/1,2/2} 
    \draw (\x-0.5,\y-0.5) circle(0.5);
\end{tikzpicture}
\hspace{1.4cm}
\begin{tikzpicture}[scale=0.3]
\def\rows{3} 
\def\cols{3} 
  \draw[-](0,0) -- (\rows,0); 
  \draw[-](0,0) -- (0,\cols); 
  \foreach \x/\y in {1/3,2/2,3/1} {
    \draw[fill=black] (\x-0.5,\y-0.5) circle(0.2); 
    \draw[-latex](\x-0.5,\y-0.5) -- (\x-0.5,3.7); }
  \foreach \x/\y in {2/1,1/2} 
    \draw (\x-0.5,\y-0.5) circle(0.5);
\end{tikzpicture}
  \caption{\label{fig:main_va}Vertical ray placement in diagrams $\ID(152869347)$, $D_1$, and $D_2$.}
\end{center}
\end{figure}

We've just proven that death ray origins are equivalent to row and column dots on Rothe diagrams.
Thus, on a Rothe diagram, no bubbles can exist to the right of its column dots or above its row dots, giving us the following statement.

\begin{corollary}
Rothe diagrams satisfy both the vertical and horizontal popping rules.
\end{corollary}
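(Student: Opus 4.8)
The plan is to deduce both popping rules directly from the two facts already in hand. Proposition \ref{pd=>dot} identifies the row dots, the column dots, and the death ray origins as one and the same set $\{(i,w_i)\}$, while Lemma \ref{lem:sw_dot} characterizes a cell as a bubble exactly when it lies below a death ray origin and to the left of a death ray origin; that is, when there is an origin strictly above it in its column and an origin strictly to its right in its row. The key auxiliary observation, used in both halves, is that since $w$ is a bijection, each column and each row of $\ID(w)$ contains exactly one origin.

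First I would verify the vertical popping rule. Fix a row dot; by Proposition \ref{pd=>dot} it coincides with an origin $(i,w_i)$, so a cell lying above this dot has the form $(i',w_i)$ with $i'>i$ and sits in column $w_i$. For such a cell to carry a bubble, Lemma \ref{lem:sw_dot} demands a death ray origin strictly above it in column $w_i$. But the origins are precisely $\{(k,w_k)\}$, and by injectivity of $w$ the unique one in column $w_i$ is $(i,w_i)$ itself, which lies strictly below $(i',w_i)$. Hence no origin lies above $(i',w_i)$, the bubble criterion fails, and no bubble can appear above a row dot.

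The horizontal popping rule follows by the mirror argument, swapping the roles of rows and columns. A column dot coincides with the origin $(w^{-1}(j),j)$; a cell to its right is $(w^{-1}(j),j')$ with $j'>j$, lying in row $w^{-1}(j)$. The unique origin in that row is the column dot itself, which lies strictly to the left of $(w^{-1}(j),j')$, so Lemma \ref{lem:sw_dot} again forbids a bubble there.

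I do not expect a genuine obstacle: once Proposition \ref{pd=>dot} and Lemma \ref{lem:sw_dot} are available, the statement is essentially a restatement of the death ray picture, in which the upward ray from each origin covers exactly the cells above it and the rightward ray covers exactly the cells to its right. The only points requiring care are keeping the French coordinate convention straight and invoking the bijectivity of $w$ to guarantee a single origin per row and per column, which is what pins down the relevant origin in each argument.
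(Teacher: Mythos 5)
Your proposal is correct and follows essentially the same route as the paper: the paper also combines Proposition \ref{pd=>dot} (dots coincide with death ray origins) with the death-ray characterization of bubbles in Lemma \ref{lem:sw_dot} to conclude that no bubble can sit above a row dot or to the right of a column dot. Your version merely spells out the details the paper leaves implicit, such as the uniqueness of the origin in each row and column coming from bijectivity of $w$.
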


In fact, the dot rule and popping rules are equivalent.

\begin{proposition}
A diagram satisfies the dot rule if and only if it satisfies both popping rules.
\end{proposition}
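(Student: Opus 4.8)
The plan is to prove the two implications separately, relying throughout on two facts that hold for \emph{every} diagram and are immediate from the definitions of $c_i$ and $r_j$: first, every bubble in row $i$ lies strictly to the left of the row dot $(i,c_i)$, since $c_i$ is chosen larger than all bubble columns in its row; second, every bubble in column $j$ lies strictly below the column dot $(r_j,j)$. I will also use that $i \mapsto c_i$ and $j \mapsto r_j$ are injective, which is forced by the clauses $j \neq c_{i'}$ and $i \neq r_{j'}$ in the definitions; in particular each of $R(D)$ and $C(D)$ contains exactly one cell in every row, so to prove $R(D)=C(D)$ it suffices to prove one containment.

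For the forward direction, assume $R(D)=C(D)$. Given any row dot $(i,c_i)$, the hypothesis makes it the column dot of its column, so $r_{c_i}=i$; then the second always-true fact says all bubbles in column $c_i$ sit below row $i$, which is exactly the vertical popping rule. Symmetrically, any column dot $(r_j,j)$ is then a row dot, so $c_{r_j}=j$, and the first fact puts every bubble of row $r_j$ strictly left of column $j$, giving the horizontal popping rule.

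For the converse, assume both popping rules and prove every row dot is a column dot by induction on $i$, i.e. that $r_{c_i}=i$. Fixing $i$, the row-dot fact together with vertical popping forces all bubbles of column $c_i$ into rows $<i$, so $i$ clears all bubbles in that column; it remains to see that $i$ is exactly the value selected by the definition of $r_{c_i}$. This splits into two claims. (A) No column to the left of $c_i$ has its column dot in row $i$: if $(i,j')$ with $j'<c_i$ were a column dot, then the column-dot fact and horizontal popping would push all bubbles of row $i$ strictly left of $j'$, making $j'$ admissible for $c_i$; since $c_i>j'$, the value $j'$ must be blocked, i.e. $j'=c_{i'}$ for some $i'<i$, and the induction hypothesis gives $r_{j'}=i'\neq i$, a contradiction. (B) No row below $i$ can serve as $r_{c_i}$: if $r_{c_i}=i^*<i$, then $(i^*,c_i)$ is a column dot, while the induction hypothesis applied to the row dot in row $i^*$ gives another column dot in that same row $i^*$; injectivity of $r$ then forces the two columns to coincide, contradicting $c_i\neq c_{i^*}$. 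Claims (A) and (B) together pin down $r_{c_i}=i$, completing the induction and hence the containment $R(D)\subseteq C(D)$, which upgrades to equality.

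The delicate point---and the step I expect to carry the real content---is the converse, because the row dots are built from the bottom up while the column dots are built from the left, so there is no single sweep that produces both at once. The device that reconciles the two orders is the induction hypothesis feeding injectivity of $r$ (and of $c$): it is what lets a statement about a row dot in a \emph{lower} row control the column dot sitting in the \emph{same} row, and what rules out a column dot drifting below its intended row.
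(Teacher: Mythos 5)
Your proof is correct and takes essentially the same route as the paper: the forward direction is read off from the definitions, and the converse is an induction on rows showing $r_{c_i}=i$, using vertical popping to clear column $c_i$, horizontal popping plus minimality of $c_i$ plus the induction hypothesis to exclude column dots to the left of $(i,c_i)$ (your Claim (A), which the paper does as a three-case analysis), and the induction hypothesis plus injectivity to exclude rows below $i$ (your Claim (B), the paper's final paragraph). The only cosmetic differences are that you make the reduction to a single containment explicit and absorb the paper's separate row-$1$ base case into the general step, where the induction hypothesis is vacuous.
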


\begin{proof}
First, assume that a diagram $D$ satisfies the dot rule.
By definition, there are no bubbles to the right of a row dot, and there are no bubbles above a column dot.
Then, since $R(D) = C(D)$, both popping rules are satisfied.

Conversely, assume $D$ satisfies both popping rules.
We prove that $R(D) = C(D)$ by induction on the rows.
Precisely, for each $i = 1,2,...$, we prove that $(i,c_i) = (r_{c_i},c_i)$.
In row 1, if there is any gap between bubbles, a column dot would be placed in the leftmost gap and violate the horizontal popping rule.
Thus in row 1, say there is a bubble in each column $1,...,k$.
Then, the row dot $(1,c_1) = (1,k+1)$ and the column dot $(r_{k+1},k+1) = (1,k+1)$ coincide.

Now, assume that row dots and column dots coincide for all rows below row $i$.
Let $(i,c_i)$ denote the row dot in row $i$.
We wish to say that $(i,c_i)$ is also the column dot in column $c_i$.
So we need row $i$ to be above all bubbles in column $c_i$, we need to have no column dots in row $i$ to the left of $(i,c_i)$, and we need row $i$ to be the first row to satisfy these two conditions.

By vertical popping, no bubbles exist above $(i,c_i)$, so row $i$ is indeed above all bubbles in column $c_i$.

Consider a cell $(i,j)$ such that $j < c_i$.
If it is a bubble, it cannot be a column dot.
If there is a bubble to the right of it, it cannot be a column dot by horizontal popping.
So assume there is no bubble in $(i,j)$ and no bubbles to it's right.
By the minimality of $c_i$, there must be a row $i' < i$ such that $j = c_{i'}$.
That is there must be a row dot $(i',j)$ beneath $(i,j)$.
By induction, this is also a column dot $(r_j,j)$, whence $(i,j)$ cannot be a column dot.
Thus, we have no column dots in row $i$ to the left of $(i,c_i)$.

Lastly, there are no row dots in column $c_i$ below row $i$, so there are no column dots below $(i,c_i)$ either, by induction.
Hence, row $i$ must be the first row to satisfy the two conditions above, and $(i,c_i)$ is the column dot in column $c_i$.
\end{proof}

\subsection{Numbering Condition}
We now define a numbering condition on diagrams, which is satisfied by Rothe diagrams and which is shown to be different than the dot rule by examples.

\begin{definition}
Consider two procedures for labeling the bubbles in a bubble diagram with numbers.
First, we give the bubbles a \textbf{\textit{horizontal numbering}} where in the $i$th row, we label the bubbles from left to right $i,i+1,i+2,$ and so on.
Second, we give the bubbles a \textbf{\textit{vertical numbering}} where in the $j$th column, we label the bubbles from bottom to top $j,j+1,j+2,$ and so on.
We say a bubble diagram satisfies the \textbf{\textit{numbering condition}} and is an \textbf{\textit{enumerated diagram}} if the horizontal numbering and vertical numbering yield the same labels for each bubble.
\end{definition}

\begin{example}{\label{ex:num}}
In Figure \ref{fig:main_number}, $\ID(152869347)$ and $D_1$ satisfy the numbering condition, but $D_2$ does not since its horizontal numbering is different from its vertical numbering.
Note that $D_1$ is numbered but not dotted, and $D_2$ is dotted but not numbered.
Hence neither condition implies the other.
\end{example}

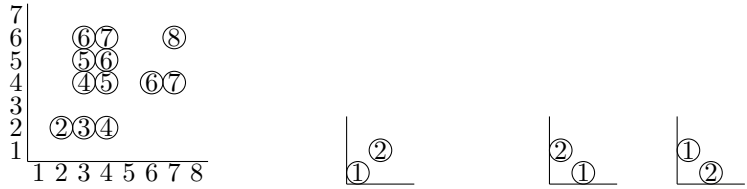
\begin{figure}[ht]
\begin{center}
  \begin{tikzpicture}[scale=0.3]
\def\rows{8} 
\def\cols{7} 
  \draw[-](0,0) -- (\rows,0); 
  \draw[-](0,0) -- (0,\cols);
  \foreach \y in {1,2,...,\cols} 
    \draw (-0.5,\y-0.5) node{\y}; 
  \foreach \x in {1,2,...,\rows}
    \draw (\x-0.5,- 0.5) node{\x};
  \foreach \x/\y/\n in {2/2/2,3/2/3,4/2/4,3/4/4,4/4/5,6/4/6,7/4/7,3/5/5,4/5/6,3/6/6,4/6/7,7/6/8} 
    \draw (\x-0.5,\y-0.5) circle(0.5) node{\n};
\end{tikzpicture}
\hspace{1.5cm}
\begin{tikzpicture}[scale=.3]
\def\rows{3} 
\def\cols{3} 
  \draw[-](0,0) -- (\rows,0); 
  \draw[-](0,0) -- (0,\cols);
  \foreach \x in {1,2} 
    \draw (\x-0.5,\x-0.5) circle(0.5) node{\x};
\end{tikzpicture}
\hspace{1.5cm}
\begin{tikzpicture}[scale=.3]
\def\rows{3} 
\def\cols{3} 
  \draw[-](0,0) -- (\rows,0); 
  \draw[-](0,0) -- (0,\cols);
  \foreach \x/\y in {2/1,1/2} 
    \draw (\x-0.5,\y-0.5) circle(0.5) node{\y};
\end{tikzpicture}
\hspace{0.5cm}
\begin{tikzpicture}[scale=.3]
\def\rows{3} 
\def\cols{3} 
  \draw[-](0,0) -- (\rows,0); 
  \draw[-](0,0) -- (0,\cols);
  \foreach \x/\y in {2/1,1/2} 
    \draw (\x-0.5,\y-0.5) circle(0.5) node{\x};
\end{tikzpicture}
  \caption{\label{fig:main_number}Numbering of diagrams $\ID(152869347)$, $D_1$, and $D_2$.}
\end{center}
\end{figure}

\begin{proposition} \label{number_prop}
Rothe diagrams satisfy the numbering condition.
\end{proposition}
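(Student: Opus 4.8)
The plan is to fix an arbitrary bubble of $\ID(w)$, compute its horizontal and vertical labels explicitly as inversion counts, and then show the two counts coincide using only the fact that $w$ is a permutation. Every bubble sits at a cell of the form $(i,w_j)$ with $i<j$ and $w_i>w_j$; write $c=w_j$ for its column, so that $c<w_i$.

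First I would compute the horizontal label. By definition the leftmost bubble in row $i$ is labeled $i$, so the label of our bubble is $i$ plus the number of bubbles strictly to its left in row $i$. Using Lemma \ref{lem:sw_dot} and the identification of death ray origins with the cells $(m,w_m)$ from Proposition \ref{pd=>dot}, a cell $(i,c')$ with $c'<c$ is a bubble exactly when $c'=w_{j'}$ for some $j'>i$ with $w_{j'}<w_i$; since $c'<c=w_j<w_i$, the constraint $w_{j'}<w_i$ is automatic, so the bubbles to its left are counted by $\#\{j'>i : w_{j'}<w_j\}$. Hence the horizontal label is $H=i+\#\{j'>i : w_{j'}<w_j\}$. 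Symmetrically, the bottommost bubble in column $c$ is labeled $c=w_j$, and a cell $(i',c)$ with $i'<i$ is a bubble exactly when $w_{i'}>w_j$ (the requirement $i'<j$ being automatic as $i'<i<j$); thus the vertical label is $V=w_j+\#\{i'<i : w_{i'}>w_j\}$.

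The final step is the identity $H=V$. Writing $i=\#\{m\le i : w_m<w_j\}+\#\{m\le i : w_m>w_j\}$ (no $m\le i$ has $w_m=w_j$, since the unique position with that value is $j>i$) and regrouping turns $H$ into $\#\{m : w_m<w_j\}+\#\{m\le i : w_m>w_j\}=(w_j-1)+\#\{m\le i : w_m>w_j\}$, using that exactly $w_j-1$ positions of the permutation carry a value below $w_j$. Since $w_i>w_j$, the position $m=i$ contributes to the last count, so $\#\{m\le i : w_m>w_j\}=1+\#\{m<i : w_m>w_j\}$, and substituting gives $H=w_j+\#\{m<i : w_m>w_j\}=V$. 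As the label chosen by the two numberings is determined by these counts, the two labels of the bubble agree, which is the numbering condition.

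The two counting identities are routine; the only delicate point — and the main obstacle — is the bookkeeping of strict versus weak inequalities and the resulting off-by-one. Concretely, one must remember that the bubble's own row index $i$ satisfies $w_i>w_j$ and therefore must be counted when passing between the weak count $\#\{m\le i\}$ and the strict count $\#\{m<i\}$. Getting this single adjustment right is exactly what reconciles the two a priori different labels.
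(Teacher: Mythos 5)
Your proof is correct, but it takes a genuinely different route from the paper's. You compute both labels directly as permutation statistics --- $H = i + \#\{j' > i : w_{j'} < w_j\}$ and $V = w_j + \#\{i' < i : w_{i'} > w_j\}$, which follow straight from the definition $\ID(w) = \{(i,w_j) : i<j,\ w_i > w_j\}$ --- and then reconcile them by an arithmetic regrouping whose only nontrivial input is that $w$, being a bijection, takes exactly $w_j - 1$ values below $w_j$; the off-by-one bookkeeping you flag (counting the position $m=i$ itself) is handled correctly. Incidentally, your citations of Lemma \ref{lem:sw_dot} and Proposition \ref{pd=>dot} are dispensable: the characterization of which cells are bubbles that you actually use is nothing but the definition of the Rothe diagram. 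The paper argues instead through the dot structure: it invokes Proposition \ref{pd=>dot} to place dots at the cells $(m,w_m)$, then matches the empty cells to the left of the bubble with the dots southwest of it (each empty cell in the row must be hit by a vertical death ray from below), and matches those same dots with the empty cells below the bubble; converting empty-cell counts to bubble counts yields equality of the two labels. The two arguments ultimately count the same quantity --- the paper's common count of empty cells is $\#\{m<i : w_m<w_j\}$, which is exactly what your regrouping absorbs --- but yours is self-contained and purely arithmetic, at the price of leaving the geometry implicit, whereas the paper's proof reuses the death-ray/dot machinery that is load-bearing elsewhere in the paper and exhibits the correspondence between the two sides explicitly.
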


\begin{proof}
Consider a bubble $b$ in cell $(i,w_j)$ in a Rothe diagram $\ID(w)$, which is labeled $n$ by horizontal numbering.
This means there are $n-i$ bubbles to the left of $b$ and therefore $w_j-1 - (n-i)$ empty cells to the left of $b$.
As proven in Proposition \ref{pd=>dot}, Rothe diagrams are dotted.
Then, each empty cell to the left of $b$ has a dot below it, since it must be touched by a death ray.
So, there are $w_j+i-n-1$ dots to the left of the $w_j$th column, beneath row $i$.
Thus, there are $w_j+i-n-1$ empty cells beneath $b$ and therefore $i-1 - (w_j+i-n-1) = n - w_j$ bubbles beneath $b$.
Hence, we also give $b$ the label of $n$ by vertical numbering.
\end{proof}

\subsection{Step-out Avoidance}
Since $D_1$ is numbered in Example \ref{ex:num} even though it is not a Rothe diagram, we need an extra condition to characterize Rothe diagrams.
We define that condition now and show that it is satisfied by Rothe diagrams.
Later, we will show it fully characterizes them.

\begin{definition}
For a given diagram that satisfies the numbering condition, define a \textbf{step-out} to be a pair of bubbles numbered $n$ and $n+1$ in cells $(i,w)$ and $(i+k,w+\ell)$ respectively, for positive $k$ and $\ell$. 
In other words, a step-out occurs whenever there exists a bubble numbered $n$ and a bubble strictly north east numbered $n+1$.
We say that an enumerated diagram is \textbf{step-out avoiding} if no pair of bubbles is a step-out in the diagram.
\end{definition}

\begin{example}
In Figure \ref{fig:main_number}, we see that $\ID(152869347)$ is step-out avoiding, but in $D_1$, the pair of bubbles is a step-out.
The definition does not apply to $D_2$ since it does not satisfy the numbering condition.
\end{example}

\begin{proposition} \label{prop:stepout}
Rothe diagrams are step-out avoiding.
\end{proposition}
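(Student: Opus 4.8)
The plan is to reduce the proposition to a counting statement about death ray origins and then exploit the fact that the permutation $w$ places exactly one origin in each row and each column. First I would record a formula for the label of a bubble. Writing $s(i,c)$ for the number of death ray origins $(i',w_{i'})$ lying strictly southwest of the cell $(i,c)$ — that is, with $i' < i$ and $w_{i'} < c$ — the count carried out inside the proof of Proposition \ref{number_prop} identifies $s(i,c)$ with the number of empty cells to the left of a bubble in cell $(i,c)$, namely $(c-1)-(n-i)$ when the bubble has label $n$. Rearranging, a bubble in cell $(i,c)$ with label $n$ satisfies
$$n = i + c - 1 - s(i,c).$$

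Next I would assume, toward a contradiction, that a step-out exists: bubbles $b_1=(i_1,c_1)$ labeled $n$ and $b_2=(i_2,c_2)$ labeled $n+1$ with $i_1<i_2$ and $c_1<c_2$. Applying the formula to both and subtracting turns the label relation into the single equation
$$s(i_2,c_2) - s(i_1,c_1) = (i_2-i_1) + (c_2-c_1) - 1.$$
Since the strict southwest quadrant of $b_1$ sits inside that of $b_2$, the left-hand side counts precisely the origins in the L-shaped region that is southwest of $b_2$ but not of $b_1$. I would split this region into two disjoint pieces: the origins in rows $i_1 \le i' < i_2$ (with column $<c_2$), and the origins in rows $i' < i_1$ (with column in $[c_1,c_2)$). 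Because origins occupy distinct rows and distinct columns, the first piece has size at most $i_2-i_1$ and the second at most $c_2-c_1$.

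The crux is to improve the second bound by one. Since $b_1$ is a bubble, Lemma \ref{lem:sw_dot} forces the origin in column $c_1$ to lie strictly above row $i_1$, so column $c_1$ never contributes to the second piece; hence that piece has size at most $c_2-c_1-1$. Feeding this into the displayed equation forces both pieces to be maximal: every row $i'$ with $i_1 \le i' < i_2$ has $w_{i'} < c_2$, and every column $c'$ with $c_1 < c' < c_2$ has its origin strictly below row $i_1$.

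Finally I would apply these two maximality conditions to row $i_1$ itself. The first gives $w_{i_1} < c_2$, while the second forbids $w_{i_1}$ from lying in the open interval $(c_1,c_2)$ — otherwise column $w_{i_1}$ would have its origin in row $i_1$ rather than strictly below it. Together these force $w_{i_1} \le c_1$. But $b_1=(i_1,c_1)$ being a bubble requires its row origin to lie strictly to the right, i.e.\ $w_{i_1} > c_1$, which is the desired contradiction. I expect the delicate point to be exactly this off-by-one tightening: the naive rectangle bounds only yield $s(i_2,c_2)-s(i_1,c_1) \le (i_2-i_1)+(c_2-c_1)$, which is one too large to conclude directly, so the whole argument hinges on extracting the extra unit from the bubble condition at $b_1$ and then reading off the forced contradiction in row $i_1$.
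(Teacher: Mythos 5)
Your proof is correct, but it takes a genuinely different route from the paper's. You distill the numbering condition into the closed-form identity $n = i + c - 1 - s(i,c)$ (label equals position offset minus the number of death-ray origins strictly southwest), so that a hypothetical step-out becomes the exact equation $s(i_2,c_2)-s(i_1,c_1) = (i_2-i_1)+(c_2-c_1)-1$; you then refute this by a pigeonhole count over the L-shaped region between the two southwest quadrants, with the decisive off-by-one sharpening supplied by Lemma \ref{lem:sw_dot} at $b_1$ (its column's origin lies strictly above row $i_1$, so column $c_1$ cannot contribute to the lower piece), after which the forced maximality conditions collide with $c_1 < w_{i_1}$ in row $i_1$. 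The paper instead argues directly, with no contradiction: it fixes the bubbles $b_i,\dots,b_n$ to the left of the label-$n$ bubble in its row, shows via horizontal popping and Lemma \ref{lem:sw_dot} that each cell above them in the row of the northeast bubble is a bubble or empty according to whether that column's dot lies above or below that row, and then a case analysis on $\min\{n-i+1,k-1\}$ (which dimension of the rectangle between the two rows is smaller) yields that \emph{every} bubble strictly northeast is labeled at least $n+2$. What your version buys is a reusable formula and a single exact equation in place of case analysis, so the entire obstruction is visible as one unit of slack; what the paper's version buys is a slightly stronger conclusion (all northeast labels jump by at least two) proved directly. Both arguments ultimately rest on the same two pillars: the death-ray description of bubbles and the fact that origins occupy distinct rows and distinct columns.
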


\begin{proof}
Suppose we have a Rothe diagram $\ID(w)$.
As shown in Proposition \ref{number_prop}, Rothe diagrams are enumerated. 
Take any bubble $b_n$ numbered $n$ in cell $(i,w_j)$ and another bubble $c$ in cell $(i+k,w_j+\ell)$ for positive $k$ and $\ell$.
We claim that the number in $c$ must be greater than $n+1$.

By the numbering condition, there are $n-i$ bubbles in the same row, to the left of $b_n$; call them $b_i,b_{i+1},...,b_{n-1}$.
Let $d$ be one of $\{b_i,...,b_n\}$ and denote its cell by $(i,w_{j'})$.
Since permutation diagrams are dotted, there must be a dot above $d$. 
This dot cannot be in the $i+k$th row by horizontal popping, since $c$ is already in the $i+k$th row, to the right of the $w_{j'}$th column.
Therefore, it must be either above or below the $i+k$th row. 

If the dot is above, then there must be a bubble in cell $(i+k,w_{j'})$ by Lemma \ref{lem:sw_dot}.
If the dot is below row $i+k$, then cell $(i+k,w_{j'})$ must be empty by vertical popping.
Thus, the number of empty cells in the $i+k$th row above any of $\{b_i,...,b_n\}$ equals the number of dots that are above these bubbles and below the $i+k$th row.

This space above bubbles $b_i,...,b_n$ and below the $i+k$th row has $n-i+1$ columns and $k-1$ rows.
Since two dots are never in the same row or column, there are at most $\min\{n-i+1,k-1\}$ dots in this space.
If $\min\{n-i+1,k-1\} = n-i+1$, then $k+i \geq n+2$.
In this case, the number in $c$ is greater than or equal to $n+2$ because horizontal numbering requires the first number in the $k+i$th row to be $k+i$. 
If $\min\{n-i+1,k-1\} = k-1$, then there must be at most $k-1$ empty cells in the $i+k$th row above these bubbles.
Then there are at least $(n-i+1) - (k-1)$ bubbles in the remaining cells above $b_1,...,b_n$.
By horizontal numbering, the number in $B$ is
\begin{equation*}
\begin{aligned}
    i + k + \#\{\text{bubbles to the left of } c\} &\geq i + k + (n-i + 1) - (k-1)\\
    &= n+2.
\end{aligned}
\end{equation*}
\end{proof}

\subsection{Empty Cell Gap Rule}

We now define the final rule and show that Rothe diagrams satisfy it. 
This rule provides a full characterization of Rothe diagrams when paired with the vertical popping rule.
We do not need horizontal popping, and we present the proof of characterization in the next section.

\begin{definition}
In the rule that follows, it will be convenient to include in any diagram a ``column $0$'' which is filled with a bubble in each row.
These bubbles are called \textbf{\textit{basement bubbles}} and are appended to a diagram when considering the rule below. 
However, these bubbles do not represent inversions in a permutation and are not counted towards the total number of bubbles in each row. See Figure \ref{fig:basement} for examples.
\end{definition}

\begin{figure}
\begin{center}
\begin{tikzpicture}[scale=0.3]
\def\rows{8} 
\def\cols{7} 
  \foreach \x/\y in {0/1,0/2,0/3,0/4,0/5,0/6,0/7} 
    \filldraw[fill=black!25,draw=black] (\x-0.5,\y-0.5) circle(0.5);
  \draw[-](0,0) -- (\rows,0); 
  \draw[-](0,0) -- (0,\cols); 
  \foreach \x/\y/\n in {2/2/2,3/2/3,4/2/4,3/4/4,4/4/5,6/4/6,7/4/7,3/5/5,4/5/6,3/6/6,4/6/7,7/6/8} 
    \draw (\x-0.5,\y-0.5) circle(0.5);
\end{tikzpicture}
\hspace{1.4cm}
\begin{tikzpicture}[scale=0.3]
\def\rows{3} 
\def\cols{3} 
  \foreach \x/\y in {0/1,0/2} 
    \filldraw[fill=black!25,draw=black] (\x-0.5,\y-0.5) circle(0.5);
  \draw[-](0,0) -- (\rows,0); 
  \draw[-](0,0) -- (0,\cols); 
  \foreach \x/\y in {1/1,2/2} 
    \draw (\x-0.5,\y-0.5) circle(0.5);\end{tikzpicture}
\hspace{1.4cm}
\begin{tikzpicture}[scale=0.3]
\def\rows{3} 
\def\cols{3} 
  \foreach \x/\y in {0/1,0/2} 
    \filldraw[fill=black!25,draw=black] (\x-0.5,\y-0.5) circle(0.5);
  \draw[-](0,0) -- (\rows,0); 
  \draw[-](0,0) -- (0,\cols);
  \foreach \x/\y in {2/1,1/2} 
    \draw (\x-0.5,\y-0.5) circle(0.5);\end{tikzpicture}
  \caption{\label{fig:basement}The example diagrams with the basement bubbles shaded.}
\end{center}
\end{figure}
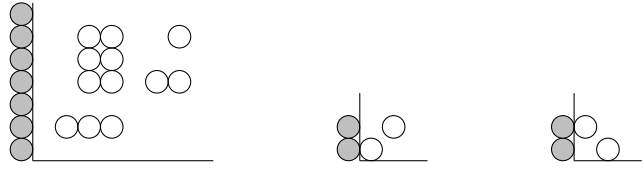

\begin{definition}
The \textbf{\textit{final bubble}} is defined as the last bubble in a row, where all cells afterwards are empty. 
When basement bubbles are taken into account, each row must contain a final bubble. 
See Figure \ref{fig:finals} for examples. 
\end{definition}

These final bubbles are similar to the border cells introduced by Fomin, Greene, Reiner, and Shimozono \cite{fomin1997balanced}. However, here they can be applied to arbitrary diagrams, not just Rothe diagrams, as they do not depend on a corresponding permutation actually existing.

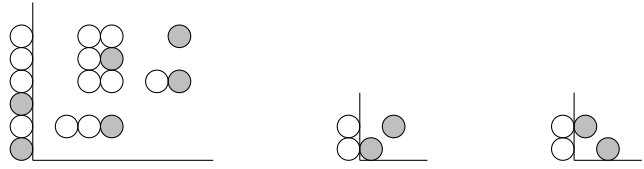
\begin{figure}
\begin{center}
\begin{tikzpicture}[scale=0.3]
\def\rows{8} 
\def\cols{7} 
  \foreach \x/\y in {0/2,0/4,0/5,0/6} 
    \draw (\x-0.5,\y-0.5) circle(0.5);
  \foreach \x/\y in {0/1,0/3,4/2,7/4,7/6,4/5} 
    \filldraw[fill=black!25,draw=black] (\x-0.5,\y-0.5) circle(0.5);
  \draw[-](0,0) -- (\rows,0); 
  \draw[-](0,0) -- (0,\cols); 
  \foreach \x/\y in {2/2,3/2,3/4,4/4,6/4,3/5,3/6,4/6} 
    \draw (\x-0.5,\y-0.5) circle(0.5);
\end{tikzpicture}
\hspace{1.4cm}
\begin{tikzpicture}[scale=0.3]
\def\rows{3} 
\def\cols{3} 
  \draw[-](0,0) -- (\rows,0); 
  \draw[-](0,0) -- (0,\cols); 
  \foreach \x/\y in {1/1,2/2} 
    \filldraw[fill=black!25,draw=black] (\x-0.5,\y-0.5) circle(0.5);
  \foreach \x/\y in {0/1,0/2} 
    \draw (\x-0.5,\y-0.5) circle(0.5);
\end{tikzpicture}
\hspace{1.4cm}
\begin{tikzpicture}[scale=0.3]
\def\rows{3} 
\def\cols{3} 
  \draw[-](0,0) -- (\rows,0); 
  \draw[-](0,0) -- (0,\cols); 
  \foreach \x/\y in {2/1,1/2} 
    \filldraw[fill=black!25,draw=black] (\x-0.5,\y-0.5) circle(0.5);
  \foreach \x/\y in {0/1,0/2} 
    \draw (\x-0.5,\y-0.5) circle(0.5);
\end{tikzpicture}
  \caption{\label{fig:finals}The example diagrams with the final bubbles shaded.}
\end{center}
\end{figure}

\begin{definition}
Consider bubbles in cells $(i_0,w_0)$ and $(i_0,w_0+n+1)$ with no bubbles between them, i.e.\ with a horizontal gap of length $n \geq 1$.
Let $\mathcal{B}_{(i_0,w_0+n+1)} \subset \IZ^{\geq 0}\!\times\IZ^+$ be the region below the $i_0$th row and bounded inclusively by the $w_0$th and $(w_0+n)$th columns.
That is,
\[ \mathcal{B}_{(i_0,w_0+n+1)} = \{(i,w) \subset \IZ^{\geq 0}\!\times\IZ^+ \colon i < i_0, w_0 \leq w \leq w_0 + n\}. \]
A diagram satisfies the \textbf{\textit{empty cell gap}} rule if, whenever there is a gap of size $n$, the box $\mathcal{B}_{(i,w+n+1)}$ contains exactly $n$ final bubbles.
\end{definition}

\begin{example}
In Figure \ref{fig:ecg_main}, we see that $\ID(152869347)$ satisfies the empty cell gap rule.
Box $a = \mathcal{B}_{(2, 2)}$ is generated by a gap of length one and contains one final bubble. 
Box $b = \mathcal{B}_{(4, 3)}$, box $c = \mathcal{B}_{(4, 6)}$, and box $d = \mathcal{B}_{(7, 7)}$ contain two, one, and two final bubbles (respectively) and also have corresponding gaps of length two, one, and two (respectively). 
Two of the six empty cell gap boxes are omitted -- $\mathcal{B}_{(5, 3)}$ and $\mathcal{B}_{(6, 3)}$ -- for readability, but it should be noted that these two gaps also satisfy the empty cell gap rule.
Diagram $D_1$ only requires one box $\mathcal{B}_{(2,2)}$ to be checked, and it does satisfy the empty cell gap rule.
Finally, diagram $D_2$ also only creates one box $\mathcal{B}_{(1,2)}$. 
This box has height zero and so cannot contain the single final bubble required by the gap. Therefore this diagram does not fulfill the empty cell gap rule.
\end{example}

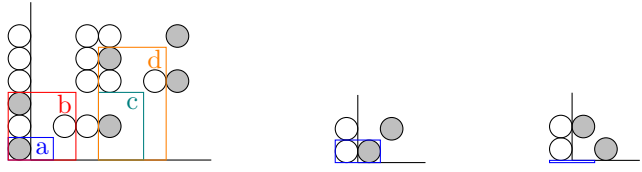
\begin{figure}
\begin{center}
\begin{tikzpicture}[scale=0.3]
\def\rows{8} 
\def\cols{7} 
  \foreach \x/\y in {0/2,0/4,0/5,0/6} 
    \draw (\x-0.5,\y-0.5) circle(0.5);
  \foreach \x/\y in {0/1,0/3,4/2,7/4,7/6,4/5} 
    \filldraw[fill=black!25,draw=black] (\x-0.5,\y-0.5) circle(0.5);
  \draw[-](0,0) -- (\rows,0); 
  \draw[-](0,0) -- (0,\cols); 
  \foreach \x/\y in {2/2,3/2,3/4,4/4,6/4,3/5,3/6,4/6} 
    \draw (\x-0.5,\y-0.5) circle(0.5);
  \draw[draw=blue] (-1,0) rectangle ++(2,1);
    \draw (0.5,0.5) node{\color{blue} a};
  \draw[draw=red] (-1,0) rectangle ++(3,3);
    \draw (1.5,2.5) node{\color{red} b};
  \draw[draw=teal] (3,0) rectangle ++(2,3);
    \draw (4.5,2.5) node{\color{teal} c};
  \draw[draw=orange] (3,0) rectangle ++(3,5);
    \draw (5.5,4.5) node{\color{orange} d};
\end{tikzpicture}
\hspace{1.4cm}
\begin{tikzpicture}[scale=0.3]
\def\rows{3} 
\def\cols{3} 
  \draw[-](0,0) -- (\rows,0); 
  \draw[-](0,0) -- (0,\cols); 
  \foreach \x/\y in {1/1,2/2} 
    \filldraw[fill=black!25,draw=black] (\x-0.5,\y-0.5) circle(0.5);
  \foreach \x/\y in {0/1,0/2} 
    \draw (\x-0.5,\y-0.5) circle(0.5);
  \draw[draw=blue] (-1,0) rectangle ++(2,1); 
\end{tikzpicture}
\hspace{1.4cm}
\begin{tikzpicture}[scale=0.3]
\def\rows{3} 
\def\cols{3} 
  \draw[-](0,0) -- (\rows,0); 
  \draw[-](0,0) -- (0,\cols); 
  \foreach \x/\y in {2/1,1/2} 
    \filldraw[fill=black!25,draw=black] (\x-0.5,\y-0.5) circle(0.5);
  \foreach \x/\y in {0/1,0/2} 
    \draw (\x-0.5,\y-0.5) circle(0.5);
  \draw[draw=blue] (-1,0) rectangle ++(2,-.1); 
\end{tikzpicture}
  \caption{\label{fig:ecg_main}The empty cell gap rule applied to $\ID(152869347)$, $D_1$, and $D_2$.}
\end{center}
\end{figure}

\begin{proposition}
Rothe diagrams satisfy the empty cell gap rule.
\end{proposition}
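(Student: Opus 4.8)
The plan is to show that whenever $\ID(w)$ has a horizontal gap, the associated box $\mathcal{B}$ contains exactly the right number of final bubbles, by translating both the gap condition and the final-bubble count into statements about death ray origins (dots) via Lemma~\ref{lem:sw_dot} and Proposition~\ref{pd=>dot}. The starting observation is that in a Rothe diagram, an empty cell inside a row must be blocked from below by a dot: if $(i_0,w)$ is empty with a bubble somewhere to its left in the same row, then by Lemma~\ref{lem:sw_dot} it fails to be below a death ray origin, which forces a death ray origin $(i',w)$ with $i' < i_0$ strictly below it (it cannot be a horizontal-ray block, since there are bubbles further left in the row). So each of the $n$ empty columns $w_0+1,\dots,w_0+n$ in the gap has exactly one dot strictly below row $i_0$ sitting in that column.

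**Next I would** identify those $n$ dots with the final bubbles counted by the box. Consider a dot $(i',w)$ with $w_0 \le w \le w_0+n$ and $i' < i_0$, arising from a gap column. I claim the basement-augmented row $i'$ has its final bubble exactly in column $w-1$ — equivalently, that $(i',w)$ is a dot means every cell strictly to the left of $(i',w)$ in row $i'$ is filled and $(i',w)$ itself is empty, so the final bubble of row $i'$ lands in $\mathcal{B}_{(i_0,w_0+n+1)}$. This is where the two endpoint bubbles of the gap, at $(i_0,w_0)$ and $(i_0,w_0+n+1)$, do the work: the bubble at $(i_0,w_0+n+1)$ forces (via Lemma~\ref{lem:sw_dot}) a dot weakly above and weakly to the right, pinning the dots in the gap columns to lie strictly below row $i_0$; and the bubble at $(i_0,w_0)$ guarantees, again by the southwest/death-ray description, that the cells in columns $w_0,\dots$ up to each gap dot are filled so that the final bubble sits precisely in the intended column. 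The correspondence ``gap column $\leftrightarrow$ dot strictly below $i_0$ in that column $\leftrightarrow$ a final bubble in $\mathcal{B}$'' is then a bijection, giving exactly $n$ final bubbles.

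**The main obstacle** I anticipate is bookkeeping at the left edge of the box, particularly the interaction with the basement bubbles in column $0$ and the case $w_0 = 0$ (when the left endpoint is itself a basement bubble). I need to check carefully that every dot landing in a gap column corresponds to a genuine final bubble inside $\mathcal{B}$ (and not, say, a final bubble one column too far left, outside the box's column range $w_0 \le w \le w_0+n$), and conversely that no final bubble inside $\mathcal{B}$ fails to come from a gap-column dot. The cleanest way to organize this is probably to prove the precise statement: for $i' < i_0$, row $i'$ contributes a final bubble to $\mathcal{B}_{(i_0,w_0+n+1)}$ if and only if row $i'$'s dot lies in one of the columns $w_0+1,\dots,w_0+n$, and then count the latter using the dots-below-empty-cells observation above. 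Once that equivalence is nailed down, the count is immediate and the proof closes.
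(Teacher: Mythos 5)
Your plan has the same skeleton as the paper's own proof: put the $n$ dots forced below the gap columns in bijection with the final bubbles in $\mathcal{B}_{(i_0,w_0+n+1)}$, using Lemma~\ref{lem:sw_dot} in both directions. However, the step in which you pin down each final bubble is false as stated. You claim that if $(i',w)$ is the dot below a gap column, $i' < i_0$, then every cell of row $i'$ strictly to the left of $(i',w)$ is filled, so that row $i'$'s final bubble sits exactly in column $w-1$. In a Rothe diagram a cell to the left of a dot is empty whenever it lies above some \emph{other} dot, and this happens inside gaps of length at least $2$. Concretely, in $\ID(152869347)$ take the gap in row $i_0=6$ between the bubbles $(6,4)$ and $(6,7)$, so $w_0=4$ and $n=2$. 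The dot below gap column $6$ is $(5,6)$, yet row $5$'s final bubble is in column $4$, not column $5$: the cell $(5,5)$ is empty because it lies above $(2,5)$, the dot below the other gap column. (A smaller slip of the same kind: a gap cell escapes the horizontal death ray of row $i_0$ because of the bubble to its \emph{right} at $(i_0,w_0+n+1)$, not because of bubbles further left in the row.)

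The flaw is local, and the repair is exactly the paper's argument: you never need the final bubble's exact column, only that it lands in the box. By Lemma~\ref{lem:sw_dot} the cell $(i',w_0)$ is a bubble --- it is to the left of row $i'$'s dot at $(i',w)$, and below column $w_0$'s dot, which lies above row $i_0$ because $(i_0,w_0)$ is itself a bubble (when $w_0=0$, use the basement bubble instead) --- so row $i'$'s final bubble is in some column $\geq w_0$; and since every cell of row $i'$ in columns $\geq w$ is empty, it is in a column $\leq w-1 \leq w_0+n-1$. Hence each gap dot yields a final bubble inside $\mathcal{B}_{(i_0,w_0+n+1)}$ in its own row, and distinct dots give distinct final bubbles. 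Together with the converse direction, which you correctly isolate as the remaining obligation (a final bubble in the box forces its row's dot into a gap column; the paper obtains this by vertical popping plus Lemma~\ref{lem:sw_dot} applied to the cell $(i',w_0+n+1)$), this closes the count and recovers the paper's proof.
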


\begin{proof}
For a Rothe diagram, assume a bubble in the cell $(i,w_j)$ and $(i, w_j+n+1)$ with $n$ empty cells in between.
An empty cell in the gap corresponds to a death ray origin below it, since it must be touched by a death ray and is not touched by the horizontal death ray in row $i$.
And there are no death ray origins in the $w_j$th column below the $i$th row by vertical popping.
Then there must be exactly $n$ death ray origins in box $\mathcal{B}_{(i, w_j+n+1)}$.
Then, for each of these death ray origins, there is a bubble in its row in the $w_j$th column by Lemma \ref{lem:sw_dot}.
Hence the $n$ death ray origins correspond to $n$ final bubbles in $\mathcal{B}_{(i, w_j+n+1)}$.

Lastly, we have to show there are no other final bubbles in the box.
If there were, then in some row $i' < i$, we'd have a final bubble in the box with a death ray origin to the right of it outside the box.
This death ray origin must be to the right of the $(w_j+n+1)$th column by vertical popping.
But then Lemma \ref{lem:sw_dot} implies there should be a bubble in cell $(i',w_j+n+1)$, contradicting the finality of the final bubble.
Hence there are exactly $n$ final bubbles in $\mathcal{B}_{(i, w_j+n+1)}$, and the empty cell gap rule is satisfied by Rothe diagrams.
\end{proof}

\section{Characterizations of Rothe Diagrams}

In this section, we combine the above properties to create several full characterizations of Rothe diagrams, completing the proof of Theorem \ref{thm:main}.

\begin{theorem}
A diagram is a Rothe diagram if and only if it satisfies the vertical popping and empty cell gap rules, i.e. (i)$\iff$(ii) in Theorem \ref{thm:main}.
\end{theorem}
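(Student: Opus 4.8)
The forward implication is already in hand: we have shown above that Rothe diagrams satisfy the vertical popping rule and the empty cell gap rule. So I would concentrate on the converse, where the goal is to produce, from a diagram $D$ satisfying both rules, a permutation $w$ with $D = \ID(w)$. The plan is to read $w$ directly off the row dots of $D$: since $D$ is finite, the columns $c_i$ are distinct and eventually equal $i$, so $w_i = c_i$ defines an element of $\fS_\infty$ whose death ray origins $(i,w_i)$ are precisely the row dots $R(D)$. It then suffices to prove $D = \ID(w)$ row by row, and by Lemma \ref{lem:sw_dot} this amounts to showing that in each row $a$ a cell left of $c_a$ is a bubble of $D$ exactly when its column carries a death ray origin strictly above row $a$.

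First I would dispatch the easy inclusion $D \subseteq \ID(w)$ using only vertical popping. If $(a,b) \in D$, then $b < c_a$ because the row dot lies to the right of every bubble in its row; moreover the unique death ray origin in column $b$ must lie above row $a$, for otherwise it is a row dot $(r,b)$ with $r < a$ and $(a,b)$ is a bubble above it, violating vertical popping. Lemma \ref{lem:sw_dot} then places $(a,b)$ in $\ID(w)$. Note that this direction never uses the empty cell gap rule.

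The substance is the reverse inclusion, namely that every \emph{empty} cell $(a,b)$ with $b < c_a$ has its column's death ray origin \emph{below} row $a$, so that it is correctly absent from $\ID(w)$. I would split these empties into the trailing cells, between the last bubble of row $a$ and $c_a$, and the gap cells, lying in a maximal run of empties flanked by two bubbles once the basement bubble in column $0$ is included. The trailing cells are immediate from the definition of $c_a$ as the smallest column past every bubble not already used by a lower row dot: each column strictly between the last bubble and $c_a$ is such a lower row dot and hence carries a death ray origin below row $a$. For the gap cells I would invoke the empty cell gap rule: a gap of size $n$ between bubbles in columns $w_0$ and $w_0+n+1$ produces exactly $n$ final bubbles in the box $\mathcal{B}_{(a,w_0+n+1)}$ below row $a$. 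The crux is a bijection sending each such final bubble, in row $r$, to that row's death ray origin $(r,c_r)$, landing inside the $n$ gap columns $w_0+1,\dots,w_0+n$; with the bijection established, the $n$ final bubbles force $n$ distinct death ray origins among the $n$ gap columns, so every gap column carries a death ray origin below row $a$.

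I expect the bijection to be the main obstacle, though it needs bookkeeping rather than a new idea. That $(r,c_r)$ lands in the gap columns follows from the row-dot definition — $c_r$ is the first free column past row $r$'s final bubble, and every skipped column is occupied by a lower row dot — combined with the already-proven inclusion applied to the two bubbles bounding the gap: were $c_r$ to reach column $w_0+n+1$ or beyond, or to sit below column $w_0$, one of those boundary bubbles would lie above a row dot, contradicting vertical popping. Injectivity is automatic since distinct final bubbles occupy distinct rows, and surjectivity runs the same boundary argument in reverse. Assembling the trailing and gap cases shows that in each row $a$ the bubbles of $D$ are exactly the columns below $c_a$ whose death ray origin lies above row $a$; together with the first inclusion this gives $D = \ID(w)$ in every row, completing the proof.
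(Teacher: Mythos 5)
Your proposal is correct, but it takes a genuinely different route from the paper's proof. The paper never constructs a permutation from $D$: it fixes the number of bubbles in each row, argues by a fairly involved induction on rows and cells that the vertical popping and empty cell gap rules force a unique diagram with those row counts, and then invokes the Lehmer code bijection --- since the Rothe diagram with the same row counts also satisfies both rules, it must coincide with $D$. Existence is thus outsourced to Lehmer's correspondence, and the entire burden falls on uniqueness. You instead build the permutation explicitly, $w_i = c_i$ from the row dots, and verify $D = \ID(w)$ cell by cell through Lemma \ref{lem:sw_dot}, cleanly separating the roles of the two hypotheses: vertical popping alone yields $D \subseteq \ID(w)$, while the empty cell gap rule drives the reverse inclusion via your counting bijection (each of the $n$ final bubbles in a gap box $\mathcal{B}$ sends its row dot into one of the $n$ gap columns, forced there by the row-dot definition on the left and vertical popping at the right boundary bubble, and distinct final bubbles give dots in distinct columns, so all $n$ gap columns carry origins below the gap's row). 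Your approach buys a self-contained, constructive argument that makes explicit which rule powers which inclusion and produces $w$ directly; the paper's buys conceptual brevity (uniqueness plus a known bijection) at the cost of an induction whose details are delicate as written. Two small points to tighten in a full write-up: justify that $i \mapsto c_i$ is a bijection of $\IZ^+$ eventually equal to the identity (greedy filling of unused columns after the last nonempty row), and note that surjectivity of your bijection is automatic by counting --- $n$ distinct dots landing in $n$ columns --- so no reverse boundary argument is needed. Curiously, the paper's source contains a second, commented-out proof of this theorem that proceeds essentially along your lines, defining $w$ from the row dots and counting row dots inside the gap boxes.
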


\begin{proof}
Arbitrary bubble diagrams that satisfy the cell gap and vertical popping rules are unique, given a number of bubbles for each row. 
In order to prove that these two rules determine a unique bubble diagram (up to rows), we use induction on those rows, starting with the leftmost bubble in each row and moving right. 
We also make use of the basement column in this proof.
\\ \par
\textit{Row $1$:} If there are no bubbles in row one, then the row is all empty cells (besides the basement bubble). 
If there are bubbles in row one, then the row cannot start with any number of empty cells. 
These empty cells would violate the empty cell gap rule (given the basement bubble). 
There are no rows below the first row, so any gap will necessary have an insufficient number of bubbles. 
The same argument applies to the rest of the row. 
Any gaps in the first row of bubbles are prevented by the lack of lower final bubbles. 
Therefore, for a given number of bubbles, row one is unique.
\\ \par
\textit{Row $k+1$:} If there are no bubbles in row $k+1$, then the row is full of empty cells (besides the basement bubble). 
On the other hand, if there are bubbles in row $k+1$, start with the leftmost column, i.e.\ the zeroth column. 
We know by definition of the basement bubbles, it is always filled. 
Take cell $c = (k+1,w_{j}+1)$. 
Either it is preceded by a bubble, or it is preceded by an empty cell. Take each of these scenarios in turn.
\\ \par
First, assume that cell $c$ is directly preceded by a bubble in cell $(k+1,w_{j})$. 
If there is a row dot in cell $(p,w_{j}+1)$ for some $p<k+1$, then cell $c$ must be empty by the vertical popping rule. 
If there does not exist such a row dot, assume for contradiction that cell $c$ is left empty. 
Then, for cell $d_0 = (k+1,w_{j}+2)$ to be a filled with a bubble, there must be exactly one final bubble in box $\mathcal{B}_{d_0}$ (by the empty cell gap rule). 
Column $w_j$ cannot contain this final bubble because if it did, the vertical popping rule would apply to cell $c$. 
So, the final bubble must be in box $\mathcal{B}_{d_0}$'s other column, the $w_{j}+1$ column. 
A final bubble in this column, however, means that cell $d_0$ must be empty by vertical popping. 
Next, take cell $d_{a+1} = (k=1,w_{j}+(a+3)), \; a \geq 0$. 
There must be $a+2$ final bubbles in the $\mathcal{B}_{d_{a+1}}$ box for $d_{a+1}$ to fulfill the empty cell gap rule. 
For each column between $w_{j}$ and $w_{j}+(a+3)$, the final bubble count of box $\mathcal{B}_{d_{a+1}}$ can increase by $n$. 
However, the vertical popping rule would correspondingly add $n$ dots.
Since the first empty cell $d_0$ did not have a final bubble, box $\mathcal{B}_{d_{a+1}}$ will always be at least one final bubble short when the vertical popping rule is fulfilled. 
Therefore, there are bubbles left to place but all remaining locations in the row do not fulfill one or both of the rules. 
So, by contradiction, if the vertical popping rule doesn't apply when a cell $c$ is preceded by a bubble, then $c$ must be a bubble. 
Bubble locations for an arbitrary diagram are uniquely determined when those bubbles are placed directly after other bubbles.
\\ \par
Conversely, take a cell $c = (i,w_j)$ preceded by at least one empty cell. 
Assume without loss of generality that $c$ is preceded by $n$ empty cells.
If $c$ fulfills the empty cell gap rule, then -- by the logic presented above -- a bubble must be placed in cell $c$. 
Otherwise, the empty cell gap rule will only be fulfilled in the case when the vertical popping rule is broken, and the row will not have enough bubbles in it. 
On the other hand, assume cell $c$ does not satisfy the empty cell gap rule. 
Note that the gap must have originally been created because of an application of the vertical popping rule. 
Then, at least one final bubble is in the box $\mathcal{B}_c$ created by this gap. 
If there are $m$ final bubbles in the $i-n$ column, the gap must (by the vertical popping rule) be at least $m$ empty cells long. 
However, if more final bubbles appeared after the $i-n$ column or if $m<n$, then box $\mathcal{B}_c$ will still have too many final bubbles and not enough gaps. 
Since the bubbles on a graph are finite, at some point the number of final bubbles will stop increasing. 
Then, there exists some cell $d$ such that the number of gaps which will equal the number of final bubbles in box $\mathcal{B}_d$. 
Therefore, there is always a unique cell after a gap for a bubble. 
Before this cell, there will be too many empty cells as compared to final bubbles. 
After this cell, there will always be insufficient final bubbles or a contradiction with the vertical popping rule. 
So, the two rules always create a unique bubble diagram.
\\ \par
We know that for any Lehmer code (equivalently a number of bubbles in each row), there exists a corresponding unique permutation \cite{lehmer1960teaching}. 
As just proven, a unique bubble diagram (up to a number of bubbles in each row) that satisfies the empty cell gap and bubble popping rules also exists. 
Since these two rules are satisfied by permutation diagrams, the diagram generated by these two rules must be identical to the permutation diagram.
\end{proof}

\begin{theorem}
A diagram is a Rothe diagram if and only if it satisfies the dot and numbering conditions, i.e. (i)$\iff$(iii) in Theorem \ref{thm:main}.
\label{thm:DC+NC}
\end{theorem}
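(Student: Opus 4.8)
The forward implication (i)$\Rightarrow$(iii) is already in hand: Proposition~\ref{pd=>dot} shows that a Rothe diagram satisfies the dot rule and Proposition~\ref{number_prop} shows that it satisfies the numbering condition. So the real content is the converse (iii)$\Rightarrow$(i). Assume $D$ is dotted and enumerated. The first step is to extract a permutation from the dots: since $R(D)$ contains exactly one cell per row and $C(D)$ exactly one per column, the hypothesis $R(D)=C(D)$ forces this common set to be the graph of a bijection $w$ of $\IZ^+$, where $w_i=c_i$ is the column of the dot in row $i$ and, symmetrically, $w^{-1}(j)=r_j$ is its row in column $j$. The plan is then to prove $D=\ID(w)$.

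One inclusion is essentially free and does not yet use the numbering condition. The dot rule implies both popping rules, so no bubble of $D$ lies to the right of the dot in its row or above the dot in its column; consequently every bubble $(i,m)\in D$ satisfies $m<w_i$ and $i<w^{-1}(m)$. Since the death ray origins of $\ID(w)$ are exactly the dots of $D$, Lemma~\ref{lem:sw_dot} places $(i,m)$ in $\ID(w)$, giving $D\subseteq\ID(w)$. To upgrade this to equality I would argue by contradiction, letting $j$ be the leftmost column in which $D$ and $\ID(w)$ differ and $i$ the lowest row in which they differ within column $j$. Because $D\subseteq\ID(w)$, the discrepancy must be a \emph{missing} bubble: $(i,j)\in\ID(w)\setminus D$, so $w_i>j$ and $i<w^{-1}(j)$, while every lower cell of column $j$ and every cell in columns to the left of $j$ already agrees with $\ID(w)$.

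The argument splits according to whether $D$ has a bubble of column $j$ strictly above row $i$. If it does not, then the column dot $r_j=w^{-1}(j)$ is the least row lying above all column-$j$ bubbles and avoiding the earlier column dots; forcing $r_j>i$ therefore requires each of the rows up through $i$ to be occupied by some column dot $r_{j'}=w^{-1}(j')$ with $j'<j$. In particular $w_i=j'<j$, contradicting $w_i>j$. This settles the first case, and the same blocking argument covers the degenerate situation where column $j$ is empty below row $i$.

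The remaining case, where $D$ does carry a column-$j$ bubble above row $i$, is the one I expect to be the main obstacle, and it is exactly here that the numbering condition does the work. Let $(i'',j)$ be the lowest such bubble. Counting vertically in $D$, the bubbles of column $j$ below $i''$ are precisely those at the already-matched lower rows, so the vertical label of $(i'',j)$ in $D$ is $j$ plus that count. Counting horizontally, and using that the columns to the left of $j$ agree with $\ID(w)$, the horizontal label of $(i'',j)$ in $D$ equals its horizontal label in $\ID(w)$. Since $\ID(w)$ is itself enumerated, that horizontal label coincides with the vertical label of $(i'',j)$ in $\ID(w)$, which counts one more bubble below it than $D$ does --- namely the bubble at $(i,j)$ that $D$ omits. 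Equating the two vertical counts, via the numbering condition for both diagrams, forces no row in $[i,i'')$ to contribute a column-$j$ bubble to $\ID(w)$; but $i$ itself is such a row, a contradiction. Hence no differing column exists and $D=\ID(w)$. The single delicate point is precisely this label comparison: the dot rule alone permits the ``shifted'' bubble $(i'',j)$, and only the numbering condition rules it out.
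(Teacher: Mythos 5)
Your proof is correct; both cases of your minimal-counterexample argument go through. (One small overstatement in Case A: when column $j$ has bubbles below row $i$, not every row up through $i$ need be blocked by an earlier column dot, only the candidate rows above those bubbles; but the single instance you actually use --- that row $i$ itself must be blocked, which forces $w_i<j$ and contradicts $w_i>j$ --- is valid.) Your route shares its skeleton with the paper's: read $w$ off the dots, note that $D\subseteq\ID(w)$ comes essentially for free from the dot definitions together with Lemma~\ref{lem:sw_dot}, then rule out a missing bubble by a contradiction that hinges on the numbering condition. But the hard step is executed as the transpose of the paper's. The paper inducts on rows: a missing bubble $(I,w_j)$ forces, via the row-dot placement rule, a bubble $b$ in the \emph{same row} strictly to its right, and the contradiction is obtained by computing both labels of $b$ inside $D$ itself --- the vertical label is $w_{j'}+d$, where $d$ counts dots to the right of $b$'s column below row $I$ (this uses the row-induction hypothesis), while the horizontal label is at most $w_{j'}+d-1$ because the missing cell supplies one extra empty cell. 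You instead pick the leftmost bad column and the lowest bad row within it, and either the column-dot placement is blocked outright (your Case A, the column analogue of the paper's row-dot forcing step), or there is a bubble $(i'',j)$ in the \emph{same column} above the gap, whose numbering in $D$ you compare with its numbering in $\ID(w)$, invoking Proposition~\ref{number_prop} for $\ID(w)$ itself. What your version buys: the counting is delegated to the already-proved enumeratedness of Rothe diagrams, so no explicit dot count is needed; the price is that you must have all columns to the left of $j$ agree in \emph{every} row (hence the column-first choice of counterexample) in order to transfer horizontal labels from $D$ to $\ID(w)$, and you lean on one more prior result than the paper does, whose argument stays self-contained within $D$. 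Either argument is a legitimate proof of (iii)$\Rightarrow$(i).
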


\begin{proof}
Consider an arbitrary bubble diagram that satisfies the dot and numbering conditions.
Place the dots on the diagram and let $(1,w_1),(2,w_2),...$ denote their cells.
In fact, since the dots are never in the same column, we are able to refer uniquely to the $w_j$th column and the $(i,w_j)$th cell without confusion.
We claim that a bubble is placed in the cell $(i,w_j)$ exactly when $i < j$ and $w_i > w_j$, which shows that this diagram is the Rothe diagram of the permutation $2 = w_1w_2\cdots.$

Assume there is a bubble in cell $(i,w_j)$.
By the dot condition there must be a dot above and a dot to the right of this cell.
The dot above is in cell $(j,w_j)$ meaning $i < j$, and the dot to the right is in cell $(i,w_i)$ meaning $w_i > w_j$.

Conversely, consider a cell $(i,w_j)$ such that $i < j$ and $w_i > w_j$.
We claim there must be a bubble in any such cell, and we prove it by induction on $i$.
In the base case, the cells to the left of the dot at $(1,w_1)$ are the only ones to satisfy the assumption.
And indeed we must place a bubble in each of these cells.
For if not, then we would place a column dot in a cell to the left of $(1,w_1)$, a contradiction.

Now, let's assume we've shown that there is a bubble in every cell $(i,w_j)$ such that $i < j$ and $w_i > w_j$ for all $i < I$, for some $I > 1$.
Consider a cell $(I,w_j)$ with $I < j$ and $w_I > w_j$, and assume there is no bubble in this cell.
Then, there must be a bubble $b$ in cell $(I,w_{j'})$ for some $w_j < w_{j'} < w_{I}$, for if not then we would place a row dot in a different cell than $(I,w_I)$, a contradiction.
There must be a dot above $b$ by the dot condition.
Then by induction, we see that the number of bubbles below $b$ is equal to the number of dots to the right of the $w_{j'}$th column and below the $I$th row, call this value $d$.
Then by the numbering condition, we would label $b$ with the number $w_{j'} + d$ when counting vertically.

However, we get something different if we count horizontally.
There are $I - 1 - d$ dots to the left of the $w_{j'}$th column and below the $I$th row, none of which are in the $w_j$th column.
That means there are at least $I - d$ empty cells to the left of $b$, since $(I,w_j)$ is empty.
If we count horizontally, then we would label $b$ with the number
$$I + \#\{\text{cells to the left of }A\} - \#\{\text{empty cells to the left of }A\}.$$
This is at most
$$I + (w_{j'} - 1) - (I - d) = w_{j'} + d - 1,$$ 
which is strictly less than our previous label.
This contradicts the numbering condition and our assumption that $(I,w_j)$ contains no bubble must be false.
Hence, this diagram is in fact the Rothe diagram for the permutation $w_1,w_2,...$, showing that any diagram satisfying the dot and numbering conditions is a Rothe diagram.
\end{proof}

\begin{theorem}
A diagram is a Rothe diagram if and only if it satisfies the dot and southwest conditions, i.e. (i)$\iff$(iv) in Theorem \ref{thm:main}.
\end{theorem}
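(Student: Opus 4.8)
The forward direction is already in hand: the proposition that Rothe diagrams are southwest supplies one half, and Proposition~\ref{pd=>dot}, which establishes $R(\ID(w)) = C(\ID(w))$, supplies the dot rule. So essentially all the work lies in the converse, and the plan is to show directly that a diagram $D$ satisfying both the dot and southwest rules equals $\ID(w)$ for the permutation $w$ read off from its dots.

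Since $D$ is dotted, $R(D) = C(D)$ is the graph of a permutation $w = w_1 w_2 \cdots$, with exactly one dot $(i,w_i)$ in each row and in each column. The definitions of the row and column dots give one inclusion for free: every bubble lies strictly to the left of the dot in its row and strictly below the dot in its column, so $D \subseteq \{(i,w_j) : i<j,\ w_i > w_j\} = \ID(w)$. The content is the reverse inclusion, which I would prove by contradiction: assume some cell $(i,w_j)$ with $i<j$ and $w_i > w_j$ is empty, and use the southwest rule to force a bubble there.

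The crux --- and the step I expect to be the main obstacle --- is producing two ``witness'' bubbles whose coordinatewise minimum is $(i,w_j)$. For a horizontal witness I claim row $i$ must contain a bubble in some column $> w_j$. If it did not, then $w_j$ would exceed every bubble-column of row $i$; and since $w$ is injective and $j>i$, the value $w_j$ differs from all earlier dot-columns $w_1,\dots,w_{i-1}$. Hence $w_j$ would be an admissible and strictly smaller candidate for the row dot $c_i = w_i$, contradicting its minimality in the definition of $R(D)$. A mirror-image argument on columns gives a vertical witness: because the dot in row $i$ occupies column $w_i > w_j$, row $i$ is never used as a column-dot row for any column $< w_j$, so minimality of the column dot $r_{w_j} = j$ forces a bubble in column $w_j$ in some row $> i$. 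Applying the southwest rule to the two witnesses $(i,b)$ with $b>w_j$ and $(a,w_j)$ with $a>i$ then places a bubble at $(\min(i,a),\min(b,w_j)) = (i,w_j)$, contradicting emptiness. This yields $D = \ID(w)$.

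I would emphasize two features of this approach. No induction is required: both witnesses fall out locally from the minimality built into the dot definitions together with injectivity of $w$, so the argument never leaves the neighborhood of the offending cell. An alternative would be to reduce to Theorem~\ref{thm:DC+NC} by deducing the numbering condition from the dot and southwest rules, but I would avoid it, since checking the numbering labels appears no simpler than the two-witness argument above, which keeps the geometric characterization of Lemma~\ref{lem:sw_dot} --- bubbles are exactly the cells lying below and to the left of dots --- directly in view.
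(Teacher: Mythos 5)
Your proposal is correct and takes essentially the same route as the paper's proof: both reduce the problem to the characterization of Lemma~\ref{lem:sw_dot}, use the minimality built into the row- and column-dot definitions to produce a bubble in row $i$ weakly to the right of the cell and a bubble in column $w_j$ weakly above it, and then invoke the southwest rule to force a bubble in the cell itself. The only difference is presentational: you extract the two witnesses by contradiction against the minimality of $c_i$ and $r_{w_j}$, while the paper argues directly that every column skipped between a row's final bubble and its row dot carries a dot below that row.
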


\begin{proof}
Let $D$ be a diagram satisfying the dot and southwest conditions.
Denote its row and column dots by $R(D) = C(D) = \{(1,w_1),(2,w_2),...\}$.
We claim that $D$ is the Rothe diagram of the permutation $w = w_1w_2\cdots$.
By Lemma \ref{lem:sw_dot}, it is sufficient to show that a cell is filled with a bubble in $D$ if and only if the cell is below a dot and to the left of a dot, since these dots correspond to the death ray origins in $\ID(w)$.

If a cell is filled with a bubble in $D$ then there is a dot to the right and a dot above it by the definitions of row dot and column dot respectively.
Conversely, assume a cell $(i,w)$ is to the left of a dot and below a dot.
By the definition of a row dot, the final bubble in row $i$ is followed by columns with dots below the $i$th row and then the row dot $(i,w_i)$.
But since the dot in the $w$th column is above the $i$th row, the final bubble in row $i$ must come in cell $(i,w)$ or to the right of it.
Similarly, using the definition of column dots, there must also be a bubble in $(i,w)$ or above it.
Even if the final bubble in the $i$th row is to the right of $(i,w)$ and the final bubble in the $w$th column is above $(i,w)$, there must be a bubble at $(i,w)$ by the southwest condition.
\end{proof}

\begin{theorem} \label{thm:enum_step_out}
A diagram is a Rothe diagram if and only if it is enumerated and step-out avoiding, i.e. (i)$\iff$(v) in Theorem \ref{thm:main}.
\end{theorem}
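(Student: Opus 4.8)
The plan is to dispatch the forward direction by citation and to devote the real work to the converse, which I will obtain from a uniqueness statement rather than by building a permutation directly. The implication (i)$\Rightarrow$(v) is already available: Proposition~\ref{number_prop} says every Rothe diagram is enumerated and Proposition~\ref{prop:stepout} says every Rothe diagram is step-out avoiding. So only (v)$\Rightarrow$(i) remains, and the strategy is to reduce it to Lehmer's bijection \cite{lehmer1960teaching} together with the following rigidity lemma.

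\emph{Key lemma (rigidity from row counts).} I would show that two enumerated, step-out avoiding diagrams with the same number of bubbles in each row must coincide, arguing by induction on rows from the bottom up. Suppose rows $1,\dots,i-1$ of $D$ and $D'$ already agree, and for each column $j$ let $b_j$ denote the number of bubbles lying in column $j$ below row $i$ (the same quantity in both diagrams by the inductive hypothesis). In row $i$ both diagrams carry the same number $m_i$ of bubbles, and horizontal numbering forces the $k$-th of them from the left to be labelled $i+k-1$. Since each diagram is enumerated, a bubble occupying cell $(i,j)$ must also receive its vertical label $j+b_j$; hence the bubble numbered $n$ in row $i$ can sit only in a column $j$ satisfying $j+b_j=n$.

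\emph{Where step-out avoidance does the work.} Suppose the bubble numbered $n$ sits in column $j$ in $D$ and in a different column $j'$ in $D'$, say $j<j'$. Both columns satisfy $j+b_j=n=j'+b_{j'}$, and since $j<j'\le n$ we get $b_j=n-j\ge 1$. Thus column $j$ contains a bubble below row $i$; its topmost occurrence is numbered $n-1$ and sits at some cell $(i^\ast,j)$ with $i^\ast<i$, and this bubble is present in both diagrams by the inductive hypothesis. But in $D'$ the bubble numbered $n$ lies at $(i,j')$ with $i>i^\ast$ and $j'>j$, i.e.\ strictly northeast of the bubble numbered $n-1$ — precisely a step-out, contradicting that $D'$ is step-out avoiding. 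Hence $j=j'$, row $i$ agrees, and the induction closes (the base case $i=1$ is automatic, since $b_j=0$ forces the bubble numbered $n$ into column $n$). I expect this observation — that any disagreement in column position manufactures a forbidden step-out against the bubble numbered $n-1$ sitting directly below in the smaller column — to be the crux of the argument.

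\emph{Finishing.} Given an enumerated, step-out avoiding diagram $D$, I would record its row counts $(m_1,m_2,\dots)$, a finitely supported sequence of nonnegative integers. By Lehmer's bijection \cite{lehmer1960teaching} there is a permutation $w$ whose Rothe diagram $\ID(w)$ has exactly these row counts. By Propositions~\ref{number_prop} and \ref{prop:stepout} the diagram $\ID(w)$ is itself enumerated and step-out avoiding, and it shares its row counts with $D$, so the rigidity lemma gives $D=\ID(w)$. Therefore $D$ is a Rothe diagram, establishing (v)$\Rightarrow$(i) and, with the forward implication noted above, completing the equivalence and hence the last case of Theorem~\ref{thm:main}.
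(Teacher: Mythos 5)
Your proposal is correct and takes essentially the same approach as the paper: both reduce (v)$\Rightarrow$(i) to the claim that an enumerated, step-out avoiding diagram is uniquely determined by its row counts (proved by bottom-up induction on rows, where any freedom in placing the bubble numbered $n$ is killed by a step-out against the bubble numbered $n-1$ sitting atop the smaller column), and then combine this with Lehmer's bijection and Propositions~\ref{number_prop} and \ref{prop:stepout}. The only difference is presentational — the paper phrases uniqueness as a forced greedy construction, while you phrase it as a rigidity comparison of two diagrams — but the underlying induction and the use of step-out avoidance are identical.
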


\begin{proof}
Given a non negative sequence $a_1,a_2,...$ with finitely many nonzero elements, we claim there exists a unique enumerated step-out avoiding diagram that has $a_i$ bubbles in the $i$th row for all $i$.
There is also a unique permutation diagram that has $a_i$ bubbles in the $i$th row for all $i$.
And since the permutation diagram is enumerated and step-out avoiding, these must be the same diagram.

To show the claim, we construct the enumerated step-out avoiding diagram row by row in the only way possible.
The first row is determined by the numbering condition alone and we fill in the first $a_1$ cells with bubble.
Now assume the first $i-1$ rows have been constructed such that the numbering condition and step-out avoidance are satisfied.
We have $a_i$ bubbles to place in the $i$th row.
If $a_i = 0$ we are done, and if not, then the first bubble must be numbered $i$, so we must place this bubble in a column whose uppermost bubble so far is numbered $i-1$.
If there are multiple options, then we must choose the left-most option, for if not, then the left-most option will form a step-out with the newly placed bubble.
This logic continues for every bubble in this row, and we get a unique placement.
\end{proof}

\section{Characterizations of Free Columns}

We can also apply the previous characterization to a more general situation by considering freely floating columns not yet placed into a fixed diagram, as in the example below.

\begin{example}
Let $\alpha$ be a column with a bubble in row 1 and a bubble in row 2.
Let $\beta$ be a column with bubbles in rows 2, 4, and 5.
Let $\gamma$ be a column with a bubble in row 2.
And let $\delta$ be a column with a bubble in row 5.
In figure \ref{fig:free_cols}, we see that if we place $\alpha, \beta, \gamma$, and $\delta$ in columns 1, 3, 4, and 6, respectively then we get the Rothe diagram $\ID(251463)$.
\end{example}

\begin{figure}
    \centering
\begin{tikzpicture}[scale=0.3, anchor=base, baseline]
\draw[-](0,0) -- (1,0);
\draw (0.5,-.75) node{$\alpha$};
\foreach \y in {1,2}
\draw (0.5,\y-.5) circle(0.5);
\end{tikzpicture}
\hspace{0.3cm}
\begin{tikzpicture}[scale=0.3, anchor=base, baseline]
\draw[-](0,0) -- (1,0);
\draw (0.5,-1) node{$\beta$};
\foreach \y in {2,4,5}
\draw (0.5,\y-.5) circle(0.5);
\end{tikzpicture}
\hspace{0.3cm}
\begin{tikzpicture}[scale=0.3, anchor=base, baseline]
\draw[-](0,0) -- (1,0);
\draw (0.5,-0.75) node{$\gamma$};
\foreach \y in {2}
\draw (0.5,\y-.5) circle(0.5);
\end{tikzpicture}
\hspace{0.3cm}
\begin{tikzpicture}[scale=0.3, anchor=base, baseline]
\draw[-](0,0) -- (1,0);
\draw (0.5,-1) node{$\delta$};
\foreach \y in {5}
\draw (0.5,\y-.5) circle(0.5);
\end{tikzpicture}
\hspace{1cm}
\begin{tikzpicture}[scale=0.3]
\def\rows{6} 
\def\cols{7} 
  \draw[-](0,0) -- (\cols,0); 
  \draw[-](0,0) -- (0,\rows);
  \foreach \x/\y in {1/1,1/2,3/2,4/2,3/4,3/5,6/5} 
    \draw (\x-0.5,\y-0.5) circle(0.5);
\end{tikzpicture}
    \caption{Free columns $\alpha,\beta,\gamma$, and $\delta$, and the Rothe diagram $\ID(251463)$.}
    \label{fig:free_cols}
\end{figure}
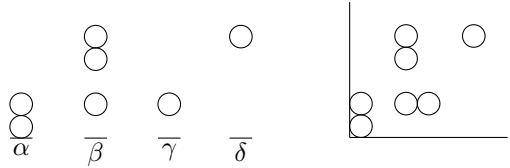

The general question is to figure out when a given set of columns can be placed in cells that yield a Rothe diagram.
Not all given sets of columns form a Rothe diagram; for example, the single column with a bubble in row 1 and a bubble in row 3 cannot be placed in any cell that results in a Rothe diagram.
We find that appropriate definitions of the numbering and step-out avoiding conditions are sufficient to answer this question.

\begin{definition}
A \textbf{collection of free columns} $C$ will be represented by a finite sequence of subsets of $\IZ^+$.
We write $C = \alpha_1,...,\alpha_n$, where $\alpha_j \subset \IZ^+$ for all $j$.
Each $\alpha_j$ represents a column with a bubble in row $i$ for each $i$ in $\alpha_j$.
A \textbf{placement} of $C$ is a sequence of distinct positive integers $w_1 < \cdots < w_n$ resulting in a diagram $D \subset \IZ^+ \times \IZ^+$ defined by $D = \{(i,w_j) \colon i \in \alpha_j, j = 1,...,n\}$, using the French convention.
That is, each free column $\alpha_j$ is placed in the $w_j$th column of diagram $D$.
Note that free columns are not allowed to move past each other horizontally by this definition.
\end{definition}

Thus, our question is, given a collection of free columns $C$, is there a placement of $C$ resulting in a Rothe diagram?
In the previous example we were given $C = \{1,2\}$, $\{2,4,5\}$, $\{2\}$, $\{5\}$, and we saw that the placement $1,3,4,6$ resulted in the Rothe diagram $\ID(251463)$.

\begin{definition}
We give a collection of free columns $C$ a \textbf{horizontal numbering}, where in row $i$, we label the bubbles from left to right $i,i+1,i+2$, and so on. 
We say a collection of free columns $C = \alpha_1,...,\alpha_n$ satisfies the \textbf{numbering condition} if after we give the columns a horizontal numbering, the columns are labeled with unbroken intervals whose starting values strictly increase moving left to right.
That is, the lowest bubble in any column is labeled with some $i$, the next lowest bubble is labeled $i+1$, the next $i+2$, and so on.
And the label of the lowest bubble in $\alpha_1$ is strictly less than the label of the lowest bubble in $\alpha_2$, which is strictly less than the label of the lowest bubble in $\alpha_3$, and so on.
We say such a collection is \textbf{enumerated}.
\end{definition}

\begin{definition}
Once we've given a collection of free columns $\alpha_1,...,\alpha_n$ a horizontal numbering, we say a pair of bubbles is a \textbf{step-out} if it is a step-out in the diagram resulting from any placement of the free columns.
Note that since free columns are not allowed to move past each other in placements, a step-out in one placement is a step-out in any other placement.
A collection is \textbf{step-out avoiding} if it contains no step-outs.
\end{definition}

We can now state and prove the following corrolary of Theorem \ref{thm:main}:

\begin{unnum_corollary}[Corrolary \ref{cor:free_columns}]
For a collection of free columns, there is a unique placement resulting in a Rothe diagram if and only if the collection is enumerated and step-out avoiding.
\end{unnum_corollary}

\begin{proof}
First, assume a collection of free columns has a unique placement resulting in a Rothe diagram $D$.
By Proposition \ref{number_prop}, $D$ is enumerated, which means the $w_j$th column of $D$ is labeled vertically by an unbroken interval starting at the value $w_j$.
Since $w_j$ strictly increases from left to right, the collection of free columns is enumerated as well.
Also, $D$ is step-out avoiding by Proposition \ref{prop:stepout}, implying the collection of free columns is step-out avoiding too.

Conversely, assume a collection of free columns is enumerated and step-out avoiding.
We can construct diagram $D$ by placing each free column in the $j$th column of the diagram for $j$ the label of the lowest bubble in the free column.
This process is well-defined since these $j$-values are distinct and increasing by definition of the numbering condition.
Then the diagram clearly satisfies the numbering and step-out conditions defined originally.
So by Theorem \ref{thm:enum_step_out}, $D$ is a Rothe diagram.

And this is unique because any other diagram $D'$ resulting from a different placement will violate the numbering condition, having a column whose vertical numbering starts at an incorrect value.
\end{proof}

\bibliographystyle{plain}
\bibliography{refs}

\end{document}